\newtheorem{theorem}{Theorem}
\newtheorem{lemma}[theorem]{Lemma}
\newtheorem{proposition}[theorem]{Proposition}
\newtheorem{remark}{Remark}
\newcommand{\floor}[1]{\left\lfloor #1 \right\rfloor}
\newcommand{\ceil}[1]{\left\lceil #1 \right\rceil}
\newcommand{\eps}{\varepsilon}
\newcommand{\Prob}{\mathbb{P}}
\newcommand{\ind}[1]{\mathbf{1}_{#1}}
\newcommand{\prob}[1]{\Prob{\left( #1 \right)}}
\newcommand{\prd}[2]{\Prob_{\mathrm{D}_{#1}}{#2}}
\newcommand{\prb}[2]{\Prob_{\mathrm{B}_{#1}}{#2}}
\newcommand{\prdp}[1]{\prd{p}{\left(#1\right)}}
\newcommand{\prbp}[1]{\prb{p}{\left(#1\right)}}
\newcommand\Bin{\operatorname{Bin}}
\newcommand\Var{\operatorname{Var}}
\newcommand\Cov{\operatorname{Cov}}
\newcommand\E{\mathbb{E}}
\newcommand\Ec[2]{{\E\left(#1\,|\,#2\right)}}
\newcommand\G{\mathbb{G}}
\newcommand\R{\mathbb{R}}
\newcommand\Z{\mathbb{Z}}
\newcommand\cA{\mathcal{A}}
\newcommand\cE{\mathcal{E}}
\newcommand\cI{{\mathcal I}}
\newcommand\cJ{{\mathcal J}}
\newcommand\cM{\mathcal{M}}
\newcommand\cR{{\mathcal R}}
\newcommand\cS{{\mathcal S}}
\newcommand\cT{{\mathcal T}}
\newcommand{\fS}{{\mathfrak S}}
\newcommand{\e}{{\mathrm{e}}} 
\newcommand{\dd}{{\mathbf d}}
\newcommand{\xx}{{\mathbf x}}
\newcommand{\yy}{{\mathbf y}}
\newcommand{\tbin}[2]{{\textstyle\binom{#1}{#2}}}
\newcommand{\pd}[2]{\frac{\partial #1}{\partial #2}}
\newcommand{\By}[2]{\overset{\mbox{\tiny{#1}}}{#2}}
\newcommand{\ByRef}[2]{   \By{\eqref{#1}}{#2} }
\newcommand{\geBy}[1]{    \By{#1}{\ge} }
\newcommand{\eqByRef}[1]{ \ByRef{#1}{=} }
\newcommand{\justify}[1]{\fbox{\scriptsize{#1}}\quad}
\newcommand{\citer}[2]{\cite{#1}*{#2}}
\begin{document}
\date{}
\title{On the upper tail of star counts in random graphs}
\author{Margarita Akhmejanova}
\author{Matas \v{S}ileikis}
\address{Institute of Computer Science of the Czech Academy of Sciences, Pod Vod\'{a}renskou v\v{e}\v{z}\'{\i} 2, 182 00 Prague, Czechia. E-mail: {\tt matas@cs.cas.cz}.}
\thanks{M\v{S} was supported by the long-term strategic development financing of the Institute of Computer Science (RVO: 67985807)}

\begin{abstract}
   Let $X$ count the number of $r$-stars in the random binomial graph $\G(n,p)$. We determine, for fixed $r$ and $\eps > 0$, the asymptotics of $\log \prob{X \ge (1 + \eps)\E X}$ assuming only $\E X \to \infty$ and $p \to 0$ thus giving a first class of irregular graphs for which the \emph{upper tail problem} for subgraph counts (stated by Janson and Ruci\'nski in 2004) is solved in the sparse setting.
\end{abstract}
\maketitle
  \section{Introduction}
  Given an integer $r \ge 2$, let $X = X^{K_{1,r}}_{n,p}$ be the number of $r$-armed stars in the random binomial graph $\G(n,p)$ and write $\mu := \E X = n \binom{n-1}{r}p^r$. We study the asymptotics, as $n \to \infty$, of
  \[
    -\log \prob{X \ge (1 + \eps) \mu}
  \]
  for an arbitrary constant $\eps > 0$. 
  This is an instance of the \emph{upper tail problem} that asks the same question for the number $X = X^H_{n,p}$ of copies of an arbitrary (fixed) graph $H$, see Janson and Ruci\'nski \citer{JRdeletion}{Problem 6.4}. For a summary of the exciting history of this intensively studied and elusive problem, see, e.g., \cite{HMS22} and the references therein.

  We are mainly interested in the range of $p = p(n)$ which corresponds to a natural condition $\mu \to \infty$ (if $\mu \to \lambda$, then $X$ converges to Poisson($\lambda$), so there is no exponential decay), and we will assume $p \to 0$ (even though the case $p = \Theta(1)$ remains an interesting analytic problem in view of the large deviation principle established by Chatterjee and Varadhan~\cite{CV11}). 

  \v{S}ileikis and Warnke \cite{SW20} proved that for any constant $\eps > 0$, if $p \to 0$ is such that $\mu \to \infty$, then 
  \begin{equation}
    \label{eq:stars_UT_SW}
    - \log \prob {X \ge (1 + \eps) \mu} \asymp \min \left\{ \phi(\eps)\mu, \max \left\{ (\eps \mu)^{1/r}, \eps n^2 p^r \right\} \log (1/p) \right\} \ ,
  \end{equation}
  where $a_n \asymp b_n$ stands for $a_n = \Theta(b_n)$ and $\phi$ is a strictly increasing function, defined as
  \begin{equation}
    \label{eq:def_phi}
    \phi(\eps) := (1+\eps)\log(1+\eps) - \eps, \qquad \eps \ge 0 \ .
  \end{equation}
  (Henceforth $\log$ stands for the natural logarithm.)

  Ignoring the dependency on $\eps$, the right-hand side of \eqref{eq:stars_UT_SW} is of order $\Phi_n$, where
  \begin{equation}
    \label{eq:Phi_order}
    \Phi_n :=
    \begin{cases}
      n^{r+1}p^r \quad &\text{if }p \le n^{-1 - 1/r} \log^{\frac{1}{r-1}} n \\
      n^{1 + 1/r} p \log n \quad &\text{if }n^{-1 - 1/r} \log^{\frac{1}{r-1}} n < p \le n^{-1/r} \\
      n^2 p^r \log \frac{1}{p} \quad &\text{if }p > n^{-1/r}.
    \end{cases}
  \end{equation}

  Formula \eqref{eq:stars_UT_SW} gives the asymptotics up to an absolute constant. Let us recall what is known about precise asymptotics.
  Harel, Mousset and Samotij \cite{HMS22} mention that their Theorem~1.6 can be straightforwardly extended (which we have verified) to so-called strictly balanced graphs $H$. Since $K_{1,r}$ is strictly balanced, this implies that 
  \begin{equation}
\label{eq:Poisson_regime}
-\log \prob{X \ge (1 + \eps) \mu} \sim \phi(\eps) \mu \quad \text{whenever} \quad 1 \ll \mu \ll (\log n)^{r/(r-1)} \ .
  \end{equation}
  (See \cite{SWcounterexample} for examples that illustrate complications with $H$ that are not strictly balanced.)
  
  From a result by Cook, Dembo and Pham (see \citer{CDP24}{Corollary 3.3(c)}) it follows that
  \begin{equation}
    \label{eq:CDP}   
    -\log \prob{X \ge (1 + \eps) \mu} \sim \eps n^2 p^r \log (1/p) \quad \text{whenever} \quad n^{-1/r} \ll p \ll 1 \ .
  \end{equation}

  In this paper, we cover the missing range of $p$ for stars $H = K_{1,r}$ given by $\mu = \Omega(\log^{r/(r-1)} n)$ and $p = O(n^{-1/r})$ (the latter condition being equivalent to $\mu = O(n^r)$). 
  Our result goes beyond the regular graphs covered by the paper by Basak and Basu~\cite{BB23} who extended the breakthrough paper of Harel, Mousset and Samotij \cite{HMS22}. As a bonus, our result covers the transition between Poisson and localization regimes (defined below). Authors of \cite{HMS22} expressed a belief that an analogous transition holds for regular $H$, compare~\citer{HMS22}{equation (68)} and \eqref{eq:unified}. 

  At the Polish Combinatorial Conference in September 2024, we became aware that Antonir Cohen, Harel, Mousset and Samotij \cite{AHMS} are preparing a paper on the upper tail for a class of irregular graphs $H$ that includes stars. They describe the asymptotics of the log-probability implicitly as a variational problem (which was solved earlier in \cite{BGLZ} in the case $p \gg n^{-1/\Delta_H}$ that corresponds to $\mu \gg n^r$ for $H = K_{1,r}$). For $H = K_{1,r}$ their result covers the last three cases of \eqref{eq:stars_UT}.

  We are now ready to state our result. Condition $\mu \gg \log n$ is an artifact of the way we approximate the degree sequence of $\G(n,p)$ by i.i.d.\ binomial random variables and can be relaxed to $\mu \gg 1$ by a more careful argument. The combination of \eqref{eq:Poisson_regime} and Theorem~\ref{thm:stars_UT} implies that \eqref{eq:stars_UT} holds for every $\mu \gg 1$. Below $\{x\} := x - \floor{x}$ stands for the fractional part of $x$.
  \begin{theorem}
    \label{thm:stars_UT}
    Fix an integer $r \ge 2$ and let $X = X^{K_{1,r}}_{n,p}$ be the number of copies of star $K_{1,r}$ in the random graph $\G(n,p)$.
    For $\delta \ge 0$ define a function $\psi_r(\delta) = r^{-1}(r!\delta)^{1/r}$ and let
    \begin{equation}
      \label{eq:I_def}
      I_r(c,\eps) = \min_{\delta \in [0,\eps]} \left( \phi(\eps - \delta) + \psi_r(\delta)c^{1/r - 1} \right), \quad c, \eps > 0 \ .
  \end{equation}
      Furthermore, set $I_r(0, \eps) = \phi(\eps)$.
  If $p = p(n) \to 0$ is such that $\mu \gg \log n$, then
    \begin{equation}
      \label{eq:stars_UT}
      -\log \prob{X \ge (1 + \eps)\mu} \sim 
       \begin{cases}
	 I_r(c,\eps) \mu  \quad &\text{if } \mu/\log^{r/(r-1)} n \to c \in [0, \infty) \\
	 \psi_r(\eps)\mu^{1/r} \log n\quad &\text{if } \log^{r/(r-1)} n \ll \mu \ll n^r \\
        \frac{1}{r} \left( \left\{ \eps \rho \right\} ^{1/r} + \floor{\eps \rho}  \right) n \log n  \quad &\text{if } \mu/ \binom{n}{r} \to \rho \in (0, \infty) \\
	\eps n^2 p^r \log \frac{1}{p} \quad &\text{if } \mu \gg n^r  \ .
      \end{cases}  
    \end{equation}
  \end{theorem}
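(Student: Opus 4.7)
My plan proceeds in three steps. In Step~1 I reduce the correlated degree sequence of $\G(n,p)$ to an i.i.d.\ one. Since $X = \sum_v \binom{d_v}{r}$ is a functional of the degree sequence alone, the target is to replace $d_v$ by independent $B_v \sim \Bin(n-1,p)$ and study $\tilde X := \sum_v \binom{B_v}{r}$. This can be done either via conditioning on the number of edges followed by a switching argument, or by comparison with a bipartite random graph and a union bound over the ``diagonal'' discrepancy. The hypothesis $\mu \gg \log n$ is what makes this approximation lossless at the level of $\sim$: the coupling introduces a multiplicative error of order $n^{O(1)}$ in the probability, whose logarithm is negligible compared to $I_r(c,\eps)\mu$ as soon as $\mu \gg \log n$.

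In Step~2 I analyze the i.i.d.\ sum by the variational principle underlying \eqref{eq:I_def}: split the excess $\eps\mu$ as a ``bulk'' piece $(\eps-\delta)\mu$ plus a ``spike'' piece $\delta\mu$, and optimize over $\delta \in [0,\eps]$. For the lower bound on $\prob{\tilde X \ge (1+\eps)\mu}$, I construct, for the optimizer $\delta^*$, the independent events $\{B_1 \ge D\}$ with $D := (r!\delta^*\mu)^{1/r}$ (contributing $\binom{D}{r} \sim \delta^*\mu$) and $\{\sum_{i\ge 2}\binom{B_i}{r} \ge (1 + \eps - \delta^*)\mu - \binom{D}{r}\}$. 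The first event has probability $\exp(-\psi_r(\delta^*)\mu^{1/r}\log n \cdot (1+o(1)))$ via the single-Binomial tail $-\log\prob{\Bin(n-1,p) \ge D} \sim D\log(D/(enp)) \sim r^{-1}(\log n)\,D$, using $D/(np) = (n\delta^*)^{1/r}$; the second has probability $\exp(-\phi(\eps-\delta^*)\mu(1+o(1)))$ by a Cram\'er / Poisson approximation for the i.i.d.\ sum, valid since $\binom{B_i}{r}$ is nearly Poisson-distributed in this range. Rewriting $\mu^{1/r}\log n = c^{1/r-1}\mu$ at $\mu \sim c\log^{r/(r-1)}n$ reproduces $I_r(c,\eps)\mu$. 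Cases (iii)--(iv) are analogous but feature saturation: the optimal single-spike degree $D = (r!\delta\mu)^{1/r}$ would exceed $n-1$, so one plants $\floor{\eps\rho}$ (resp.\ $\ceil{\eps n p^r}$) saturated vertices of degree $n-1$ plus, in case (iii), a single vertex of degree $\{\eps\rho\}^{1/r} n$ for the fractional part; the cost becomes $(n-1)\log(1/p)$ per saturated vertex, matching the stated formulas.

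Step~3 is the matching upper bound. I truncate $\tilde X = \sum_i \binom{B_i}{r}\ind{B_i \le T} + \sum_i \binom{B_i}{r}\ind{B_i > T}$ at a regime-dependent threshold $T \sim D$. A union bound over $\{i : B_i > T\}$ combined with the single-Binomial tail reproduces the spike cost, while a Bennett-type concentration on the truncated i.i.d.\ sum gives the Poisson rate $\phi(\cdot)\mu$ for the bulk. The main obstacle is matching constants precisely. First, one must solve the variational problem over the number $k$ and sizes $D_1, \ldots, D_k$ of spikes: the cost per unit of spike contribution is monotone decreasing in $D$ above the threshold $np$, which forces $k = 1$ in cases (i)--(ii) and the saturation-based discrete optima in cases (iii)--(iv). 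Second, achieving $(1+o(1))$ precision rather than only the correct order requires the Step~1 coupling to be sharp enough that its multiplicative error is subexponential in $I_r(c,\eps)\mu$, which is exactly where the hypothesis $\mu \gg \log n$ is consumed.
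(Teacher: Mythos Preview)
Your outline is broadly the same as the paper's: reduce to i.i.d.\ binomials, split the sum at a threshold, control the truncated part by a Cram\'er/Bennett argument giving the rate $\phi(\cdot)\mu$, control the spikes by a union bound over configurations together with the single-binomial tail, and optimise over $\delta$. The lower-bound constructions you describe (one planted vertex of degree $D$ in cases (i)--(ii), $\floor{\eps\rho}$ saturated vertices plus one partial vertex in case (iii), $\sim \eps n p^r$ saturated vertices in case (iv)) are exactly the ones the paper uses.

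There is, however, one genuine gap in Step~1. You assert that the degree sequence can be replaced by i.i.d.\ $\Bin(n-1,p)$ at multiplicative cost $n^{O(1)}$ uniformly over the whole range $\mu\gg\log n$. The paper does \emph{not} do this, and it is not clear that it can be done. The comparison the paper uses goes through the McKay--Wormald enumeration formula, which requires $\max_i d_i \ll m(\dd)^{1/4}$; this forces $p \ll n^{-2/3 - 4/(3r)}$, and in fact the paper only pushes the i.i.d.\ reduction up to $p \le n^{o(1)-1-1/r}$. For larger $p$ (covering all of cases (iii)--(iv) and most of case (ii)) the relevant degree sequences have $\max_i d_i$ of order $\mu^{1/r}$ or even $n$, which is far too large for enumeration, and neither the switching heuristic nor the bipartite comparison you mention gives a two-sided $n^{O(1)}$ ratio in that regime. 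The paper therefore abandons the i.i.d.\ model for dense $p$ and works directly with the $\G(n,p)$ degrees: the spike part of Lemma~\ref{lem:big_value_sum_LDP} is re-proved for correlated degrees (using that after deleting the planted set $J$ the remaining degrees into $[n]\setminus J$ are independent), while the bulk part is handled not by a Bennett bound but by Warnke's combinatorial inequality (Theorem~\ref{thm:C}), which applies to $\G(n,p)$ without any i.i.d.\ reduction and gives $\phi(\delta)\mu/C$ with $C\asymp R^{r-1}$. This cruder bulk bound is acceptable only because the dense range is deep inside the localised regime, so one merely needs $\phi(\delta)\mu/R^{r-1}\gg\Phi_n$ rather than the sharp $\phi(\delta)\mu$.

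A secondary point: your choice $T\sim D=(r!\delta\mu)^{1/r}$ for the truncation does not by itself give the sharp bulk rate $\phi(\eps-\delta)\mu$. The exponential-moment calculation (the paper's Lemma~\ref{lem:small_value_sum_LDP}) needs $R \ll (\log(1/Np))^{1/(r-1)}$, which is much smaller than $D$ once $\mu\gg\log^{r/(r-1)}n$. The paper takes $R=\eta M$ with $\eta\to 0$ chosen so that $R$ stays in this window; in the dense case this window disappears, which is another reason the i.i.d.\ route breaks down and Warnke's inequality is used instead.
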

  The authors of \cite{HMS22} call the range of $p$ where \eqref{eq:Poisson_regime} holds the \emph{Poisson regime}. It is well known that the asymptotics \eqref{eq:Poisson_regime} hold if $X$ has a Poisson distribution with expectation $\mu \to \infty$. 
  \begin{remark}
    \label{rem:minimizer}
    For a description of the minimizers in \eqref{eq:I_def} see Proposition~\ref{prop:minimizers}. In particular, it implies that the Poisson regime holds if and only if $\limsup_{n \to \infty} \mu/\log^{r/(r-1)} n \le c$ for some constant $c = c_{r,\eps}$.
  \end{remark}
  If one desires a unified expression for the asymptotics, one could show, using Proposition~\ref{prop:minimizers} and the subsubsequence principle, that for fixed $\eps$ and any $n^{-1-1/r} \ll p \ll 1$
  \begin{equation}
\label{eq:unified}
    -\log \prob{X \ge (1 + \eps)\mu} \sim \min_{\delta \in [0, \eps]} \left[ \phi(\eps - \delta)\mu + \Psi(\delta) \right],
  \end{equation}
  where
  \begin{equation*}
    \Psi(\delta) := r^{-1}\left\{ {\delta\mu}/{\tbin{n}{r}} \right\} ^{1/r} n \log n + n \floor{{\delta \mu}/{\tbin{n}{r}}} \log (1/p) \ .
  \end{equation*}
  
  The random variable $X$ is a statistic of the degree sequence of $\G(n,p)$ (note that $r!X$ is an empirical factorial moment). For $p \asymp 1/n$, a large deviation principle with speed $n$ for the empirical degree distribution was obtained by Bordenave and Caputo \citer{BC}{Corollary 1.10} (see also \cite{DAM}) who mention that interesting large deviation events occur at other speeds. Theorem~\ref{thm:stars_UT} gives speed $n^{1/r}\log n$ for the tail event of $r$-stars. Our method extends to statistics $\sum_i f(X_i)$ for more general convex functions $f$; we encourage the readers to adapt the proof to their needs.
  
  \subsection{Localisation: comparison with regular graph counts}  Poisson regime is contrasted with the \emph{localisation regime}, in which large deviations mainly occur due to appearance of a relatively small and simple structure. The standard approach to prove a lower bound for the upper tail is to choose a graph $G \subset K_n$ such that \begin{equation}
    \label{eq:seed}
    \Ec{X}{G \subset \G(n,p)} \ge (1 + \eps)\mu. \end{equation}
  If one conditions on the event $G \subset \G(n,p)$ (which has probability $p^{e(G)}$), the upper tail event becomes a typical event, making it plausible that
  \begin{equation}
\label{eq:lower_loc}
    \log \prob{X \ge (1 + \eps)\mu} \ge (1 + o(1))\log p^{e(G)} \sim -e(G) \log (1/p) \ .
  \end{equation}
  The best such bound is obtained by minimizing the number of edges over graphs satisfying \eqref{eq:seed}.
  As proved in \cite{HMS22} (and extended in \cite{BB23}), for connected \emph{regular} $H$ this lower bound can be matched by the upper bound (which is the hard part of the problem). 

  These heuristics still work for \emph{stars} in the regime $p \gg n^{-1/r}$ (cf. \eqref{eq:CDP}), since then the expectation of the star count is boosted by $\eps \mu$ by the complement of a clique on ${n-a}$ vertices, for some $a \sim (\eps np^r)$, which has asymptotically $\eps n^2p^r$ edges. However, for smaller $p$ it matters not only how many edges an optimal graph $G$ has but also how many such graphs $K_n$ contains. For example, considering $p = n^{-c}$ with constant $c \in (1/r, 1 + 1/r)$, one could show that the optimal graph $G$ is a star with $m \sim \left( r! \eps \mu \right)^{1/r}$ edges, so the bound \eqref{eq:lower_loc} is asymptotically $-m \log (1/p) = -c m \log n$. However the constant $c$ can be improved to $1/r$ by considering the event that a particular vertex has degree $m$, the logarithm of which can be shown (see \eqref{eq:weak_Chernoff} and \eqref{eq:binom_lower}) to satisfy
  \begin{equation*}
    \log \left[ \binom{n-1}{m} p^m (1 - p)^{n-1-m} \right] \sim - m \log \frac{m}{np} \sim - m \log n^{1/r} = - \tfrac{1}{r} \cdot m \log n \ .
  \end{equation*}

  \subsection{Plan of the proof}
  The general idea is to treat the number of stars as a functional of the degree sequence. The degree sequence of $\G(n,p)$ is in some sense similar to a vector of i.i.d.\ binomial random variables. Using asymptotic enumeration of graphs with a given degree sequence due to McKay~\cite{M85}, in Section~\ref{s:reduction} we prove a couple of lemmas which allow us to focus on i.i.d.\ vectors instead of the degree sequence; this approach works in some sparse regime that contains the whole transition between the Poisson and localization regimes. In Section~\ref{s:iid} we determine the asymptotics of the log-probability for i.i.d.\ vectors, treating separately the contribution of small values (which give the bulk of the sum and are main contributors of excess deviation in the Poisson regime) and large values (which are main contributors of excess deviation in the localized regime). The enumeration approach is not feasible in the denser regime, so we work directly with degrees of $\G(n,p)$. Luckily, this range is deeply within the localized regime, which allows us to bound the contribution of the small degrees in a cruder way using an inequality due to Warnke (in a somewhat similar way as it was applied in \cite{SW20}). The treatment of the contribution of large degrees is relatively bare-handed and can be seen as a refinement of \cite{SW20}, inspired by \citer{AGSW23}{Lemma~21}.

  \section{Preliminaries}
  We use asymptotic notation $O, \Omega, \Theta, o, \omega, \ll, \gg, \asymp, \sim$, always with respect to the variable $n$ tending to infinity unless stated otherwise.
  \subsection{Tail bounds}
  Given $p \in (0,1)$, define a function $H_p : [0, 1] \to [0, \infty)$ by setting, for $p \in (0,1)$,
  \begin{equation}
    \label{eq:Hp_def}
    H_p(\lambda) := \lambda \log \frac{\lambda}{p} + (1-\lambda) \log \frac{1-\lambda}{1-p}
  \end{equation}
  and extending it to $p = 0, 1$ by taking limits.
  As is well known, function $H_p$ is strictly convex and attains its minimal value of $0$ at $\lambda = p$.
  Chernoff's inequality (see, e.g., \citer{JLRbook}{(2.4)}) asserts that if $X \sim \Bin(n,p)$, then
  \[
    \prob{X \ge \lambda n} \le \e^{-nH_p(\lambda)}, \qquad \lambda \in [p, 1].
  \]
  We will often use the following weakening of Chernoff's inequality (see, e.g., \citer{AGSW23}{Eq. (22)}), which is suitable for deviations much larger than the expectation (in fact it is useless for $t \le \e np$):
  \begin{equation}
    \label{eq:weak_Chernoff}
    \prob{X \ge t} \le \exp \left( - t \log \frac{t}{\e np} \right), \quad t > 0.
  \end{equation}
  The following lower bound gives the same asymptotics for the log-probability when $t \gg np$.
  \begin{lemma}
    Let $X \sim \Bin(n,p)$. If integer $k$ satisfies $np \ll k \le n$, then
    \begin{equation}
    \label{eq:binom_lower}
      \prob{X \ge k} \ge \exp \left( - (1+ o(1))k \log \frac{k}{np}  \right) \ .
    \end{equation}
  \end{lemma}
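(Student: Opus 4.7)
The plan is to lower-bound the tail by the single atom $\prob{X=k}$ and then estimate each factor crudely. Using the elementary inequality $\binom{n}{k} \ge (n/k)^k$, valid for $k \le n$, we get
\[
  \prob{X \ge k} \ge \binom{n}{k} p^k (1-p)^{n-k} \ge \left( \frac{np}{k} \right)^{k} (1-p)^{n-k},
\]
so after taking logarithms it suffices to show that $(n-k)|\log(1-p)|$ is negligible compared with $k \log(k/np)$.

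The first useful observation is that the hypothesis $np \ll k \le n$ forces $p = o(1)$, because otherwise the ratio $k/(np) \le 1/p$ would stay bounded. Consequently $\log(1-p) = -p(1 + o(1))$, and therefore $(n-k) \log(1-p) \ge -np(1 + o(1))$. Moreover, since $k \gg np$, we have $\log(k/np) \to \infty$, and hence $k \log(k/np) \gg np$. This allows us to absorb $np(1+o(1))$ into the $(1+o(1))$ prefactor, yielding
\[
  \log \prob{X \ge k} \ge -k \log \frac{k}{np} - np(1+o(1)) = -(1+o(1)) k \log \frac{k}{np},
\]
which is the claimed bound.

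There is no real obstacle here: the argument is the natural matching lower bound for the weak Chernoff upper bound~\eqref{eq:weak_Chernoff}, and it shows that in this regime the single most likely atom already captures the correct logarithmic asymptotics. The only mild points to track are (i) that the hypotheses force $p \to 0$, and (ii) that the divergence $\log(k/np) \to \infty$ is precisely what is needed to absorb the $(1-p)^{n-k}$ contribution into the $(1+o(1))$ error.
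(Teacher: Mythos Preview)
Your proof is correct and follows essentially the same single-atom approach as the paper, but is slightly cleaner: the paper splits into the cases $k \le n/2$ (using the weaker bound $\binom{n}{k} \ge (n/(2k))^k$) and $k > n/2$ (using the crude $\prob{X \ge k} \ge p^k$), whereas your use of the sharper inequality $\binom{n}{k} \ge (n/k)^k$, valid for all $1 \le k \le n$, handles both ranges at once. Your explicit observation that the hypotheses force $p \to 0$ is also a point the paper leaves implicit.
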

  \begin{proof}
    If $k \le n/2$, then
    \[
      \prob{X \ge k} \ge \binom{n}{k}p^{k}(1-p)^{n - k} \ge \left( \frac{n/2}{k} \right)^k p^k \e^{-O(np)} = \e^{ -k \log \frac{2k}{np} + O(np) } = \e^{  - (1 + o(1))k \log \frac{k}{np} } \ .
    \]
    For $k > n/2$ note that if the first $k$ underlying Bernoulli random variables take value $1$, then $X \ge k$ and therefore
    \[
      \prob{X \ge k} \ge p^{k} = \e^{ -k \log \frac{1}{p}}  = \e^{ -k \left( \log \frac{k}{np} + \log \frac{n}{k}\right) } = \e^{  - (1 + o(1))k \log \frac{k}{np} } \ .
    \]
  \end{proof}
  
  We will also use the following inequality that is a corollary of a result by Warnke \cite{W17}.
  \begin{theorem}[{\citer{SW20}{Theorem~7}}]
    \label{thm:C}
    Let $(\xi_i)_{i \in \fS}$ be a finite family of independent random variables with~$\xi_i \in \{0,1\}$. 
    Given a family $\cI$ of subsets of $\fS$, consider random variables $Y_{\alpha} := \prod_{i \in \alpha}\xi_i$ with~$\alpha \in \cI$, and suppose $\sum_{\alpha \in \cI} \E Y_{\alpha} \le \mu$. 
    Define $Z_C :=\max \sum_{\alpha \in \cJ} Y_{\alpha}$, where the maximum is taken over 
    all $\cJ \subseteq \cI$ with 
    \begin{equation}
      \label{eq:max_cond}
      \max_{\beta \in \cJ}|\{\alpha \in \cJ: \alpha \cap \beta \neq \emptyset\}| \leq C. 
    \end{equation}
    Then, for all $C,t>0$, 
    \begin{equation*}
      \prob{Z_C \geq \mu +t }
      \le \e^{-{\phi(t/\mu)\mu}/{C} }  .
    \end{equation*} 
  \end{theorem}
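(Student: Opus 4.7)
The plan is to deduce Theorem~\ref{thm:C} from the general concentration inequality of Warnke~\cite{W17}, which gives a Bennett-type upper tail of the form $\exp(-\phi(t/\mu)\mu/C)$ for sums of products of independent Bernoullis after a suitable local bounded-codegree truncation. Concretely, the strategy proceeds via the exponential-moment method: it suffices to establish an MGF bound of the form
\[
  \E \e^{\theta Z_C} \le \exp\!\left( \frac{\mu (\e^{C\theta} - 1)}{C} \right) \quad \text{for every } \theta > 0 \ ,
\]
after which Markov's inequality at $\theta = C^{-1}\log(1 + t/\mu)$ yields the claimed Bennett-type tail (a short direct calculation shows the resulting exponent is exactly $-\phi(t/\mu)\mu/C$).

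The two steps are as follows. First, I would bound $Z_C$ deterministically by a truncated sum $\tilde Z := \sum_{\alpha \in \cI} Y_\alpha \ind{A_\alpha}$, where $A_\alpha$ is a local event depending only on those $\xi_i$ indexed by a bounded neighbourhood of $\alpha$. The event $A_\alpha$ is designed so that $Z_C \le \tilde Z$ holds pointwise on the probability space while the mean inequality $\E \tilde Z \le \mu$ is preserved. Second, I would apply Warnke's inequality from \cite{W17} directly to $\tilde Z$; the factor $C$ in the denominator of the Bennett exponent arises from the codegree bound encoded in $A_\alpha$, which in turn is dictated by the max-degree condition \eqref{eq:max_cond} on admissible $\cJ$.

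The main obstacle is specifying the correct truncation $A_\alpha$. The naive choice, namely ``the codegree of $\alpha$ in the realized family $\{\beta \in \cI : Y_\beta = 1\}$ is at most $C$'', is too restrictive: in the star configuration $\cI = \{\{1\}, \{1,2\}, \ldots, \{1, m+1\}\}$ with $C \ge 2$ and every $\xi_i = 1$, one has $Z_C = C+1$ yet every $\alpha$ has realized codegree $m$, so the naive truncation gives $\tilde Z = 0$ and the comparison $Z_C \le \tilde Z$ fails badly. The correct event $A_\alpha$ must instead allow $\alpha$'s of arbitrarily high codegree, provided at most $C$ of their intersecting neighbours are actually \emph{selected} into an admissible subfamily. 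This is exactly the deletion/truncation mechanism built into Warnke's inequality in \cite{W17}, which is transplanted to the present setting via a single well-chosen application; pinning down the bookkeeping in that transplantation is the main substantive content of the argument.
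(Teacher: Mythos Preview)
The paper does not prove Theorem~\ref{thm:C}; it is quoted verbatim from \cite{SW20} (as Theorem~7 there) and introduced simply as ``a corollary of a result by Warnke \cite{W17}'', with no argument given. So there is no in-paper proof to compare your proposal against.

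Your sketch is consistent with that description: you also propose to derive the statement from Warnke's inequality in \cite{W17} via the exponential-moment method and a codegree-based truncation. As a high-level plan this is reasonable, and your observation about the failure of the naive truncation (the star example) is correct and relevant. That said, what you have written is a strategy outline rather than a proof: you do not actually specify the event $A_\alpha$, you do not verify the pointwise inequality $Z_C \le \tilde Z$, and you defer the MGF bound entirely to an unspecified ``single well-chosen application'' of \cite{W17}. Since the theorem is treated as a black-box citation in the present paper, this level of detail is arguably adequate for the purpose at hand; but if your intent is to supply an actual proof, the substantive step---defining the right truncation and checking the hypotheses of Warnke's inequality---remains to be carried out.
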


\subsection{Asymptotic enumeration of graphs with a given sequence}
Given a degree sequence $\dd = (d_1, \dots, d_n)$, let $m(\dd) := \frac{1}{2} \sum_i d_i$ be the number of edges and $\lambda = \lambda(\dd) := {m(\dd)}/{\binom{n}{2}}$ be the edge density in a graph with degree sequence $\dd$. A formula by McKay and Wormald \citer{MW91}{Theorem 1.1} (which can be read out of an older paper by McKay~\cite{M85}) asserts that if
\begin{equation}
  \label{eq:enum_cond} \max_i d_i \ll m(\dd)^{1/4} ,
\end{equation}
then the number \( g(\dd) \) of graphs with degree sequence \( \dd \) satisfies 
  \begin{equation}
    \label{eq:enum}
    { g(\dd)}  \sim \sqrt{2} \exp \left( \frac{1}{4} - \left( \frac{\gamma_n}{2\lambda(1-\lambda)}  \right)^2 \right) \left( \lambda^\lambda (1 - \lambda)^{1 - \lambda} \right)^{\binom{n}{2}} \prod_{i \in [n]} \binom{n-1}{d_i}, 
  \end{equation}
  where, denoting the average degree by $d$,
  \begin{equation}
    \label{eq:gamma_n_def}
      \gamma_n = \gamma_n(\dd) := (n-1)^{-2} \left(\sum_{i = 1}^n d_i^2 - nd^2 \right).
  \end{equation}
  Often in the formula \eqref{eq:enum} the exponent involving $\gamma_n$ is shown to be $o(1)$ or at least $O(1)$.

\subsection{A basic property of convex functions}
\begin{proposition}
  \label{prop:convex_sum}
  Let $f : \{0, \dots, N\} \to \R$ be an increasing convex function satisfying $f(0) = 0$ and $f(N) > 0$. Let $t \in [0, nf(N)]$. 
  For any integers $m_1, \dots, m_n \in [0,N]$ we have that \begin{equation}
      \label{eq:sum_lower}
      \sum_{i=1}^n f(m_i) \ge t \quad \text{implies} \quad \sum_{i=1}^n m_i \ge \floor{{t}/{f(N)}} N + b  \ ,
  \end{equation}
  where $b := \min \{ m : f(m) \ge \left\{t/f(N)\right\}f(N) \}$.
\end{proposition}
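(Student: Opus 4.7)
The plan is to identify the extremal configurations and then read off the bound. Setting $M := f(N)$, I would first study the dual problem: for fixed $S := \sum_{i=1}^n m_i$, what is the maximum of $\sum_i f(m_i)$? Writing $S = qN + r$ with integer $q \ge 0$ and $0 \le r < N$ (with $r = 0$ when $S = nN$), I claim the maximum equals $qM + f(r)$, attained by the concentrated configuration having $q$ coordinates equal to $N$, one coordinate equal to $r$, and the remaining coordinates equal to $0$.

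To prove this, I would use a concentration swap: whenever two coordinates satisfy $1 \le m_i \le m_j \le N-1$, replacing $(m_i, m_j)$ by $(m_i - 1, m_j + 1)$ preserves $S$ and changes $\sum_k f(m_k)$ by
\[
  (f(m_j + 1) - f(m_j)) - (f(m_i) - f(m_i - 1)) \ge 0,
\]
since the forward differences of a convex function are nondecreasing. The same swap strictly increases $\sum_k m_k^2$ by $2(m_j - m_i) + 2 \ge 2$, so the process terminates. The resulting configuration has at most one coordinate in $\{1, \ldots, N-1\}$, confirming that the maximum equals $qM + f(r)$.

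Given this extremal shape, suppose $\sum_i f(m_i) \ge t$ and let $S = qN + r$ with $0 \le r < N$ as above. Then $qM + f(r) \ge t$, and since $f(r) \le M$ we obtain $q \ge K := \floor{t/M}$. If $q \ge K + 1$, then $S \ge (K+1)N \ge KN + b$, using $b \le N$, which holds because $f(N) = M \ge \{t/M\} M$. If instead $q = K$, then $f(r) \ge t - KM = \{t/M\} M$, and the minimality in the definition of $b$ forces $r \ge b$, so $S = KN + r \ge KN + b$.

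The only delicate step is the concentration lemma in the second paragraph; once the extremal shape is identified, the case analysis closing the proof is routine, and boundary cases such as $S = 0$ or $S = nN$ slot in naturally with $r = 0$.
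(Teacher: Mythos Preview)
Your proof is correct and takes essentially the same approach as the paper: both rest on the convexity swap $(m_i,m_j)\mapsto(m_i-1,m_j+1)$ to reduce to configurations with at most one coordinate in $\{1,\dots,N-1\}$; you phrase this as maximizing $\sum f(m_i)$ for fixed $\sum m_i$, while the paper minimizes $\sum m_i$ subject to $\sum f(m_i)\ge t$, which are dual formulations of the same argument (your version is slightly more careful, supplying the termination via $\sum m_k^2$). One small slip: deducing $q\ge K$ from $qM+f(r)\ge t$ needs the \emph{strict} inequality $f(r)<M$ for $r<N$ (which does follow from convexity, monotonicity, and $f(N)>f(0)=0$), not merely $f(r)\le M$, else the case $t/M\in\Z$ would only give $q\ge K-1$.
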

\begin{proof}
  Our goal is to minimize $\sum_i m_i$ subject to $\sum_i f(m_i) \ge t$.
  First note that by convexity any integers $1 \le a \le b \le N - 1$ satisfy
  \begin{equation}
\label{eq:basic}
    f(a) + f(b) \le f(a-1) + f(b+1).
  \end{equation}
  Therefore we can assume that the vector $(m_1, \dots, m_n)$ has at most one entry that is not in $\left\{ 0,N \right\}$, since otherwise, picking indices $i,j$ with $1 \le m_i < m_j \le N - 1$ and replacing $m_i$ by $m_i - 1$ and $m_j$ by $m_j+1$, in view of \eqref{eq:basic} maintains the constraint $\sum_i f(m_i)$ without changing the sum. 
  Among the vectors with at most one entry not in $\{0,N\}$, the minimum sum is clearly attained by vectors with $\floor{t/f(N)}$ entries $N$, one entry $b$ and remaining entries zero, implying \eqref{eq:sum_lower}.
\end{proof}

  \section{Reduction to i.i.d.~binomials}
  \label{s:reduction}
  Using the enumeration formula \eqref{eq:enum} we will show that in a certain sparse regime one can reduce the study of the upper tail of stars to the upper tail of the sum
  \[
  Y := \sum_{i=1}^n \binom{X_i}{r},
  \]
  where $X_1, \dots, X_n$ are i.i.d.\ random variables with distribution $\Bin(n-1,p)$.
  For this we define two probability measures on the set of vectors $\{0, \dots, n-1\}^n$: let $\prb{p}{}$ be the distribution of $(X_1, \dots, X_n)$ and let $\prd{p}{}$ be the distribution of the degree sequence of a random graph $\G(n,p)$.

  Write $\Phi_n$ as in \eqref{eq:Phi_order}. For any positive $C > 0$, let
  \begin{equation}
\label{eq:def_deltas}
    \delta_n = \delta_{C,n} := \sqrt{\frac{2C \Phi_n}{ \binom{n}{2}p}}, \qquad \Delta_n = \Delta_{C,n} := \frac{C(\log n +  \Phi_n)}{\log \frac{\log n + \Phi_n}{np}} \ .
  \end{equation}
  Writing $\dd = (d_1, \dots, d_n)$ and $m(\dd) = \frac{1}{2}\sum_{i \in [n]} d_i$, for $C > 0$ define a set of `well-behaved' vectors 
  \begin{equation}
    \label{eq:def_cR}
    \cR_{C, n} := \left\{ \dd \in \{0, \dots, n-1\}^n : \max_{i \in [n]} d_i \le \Delta_n, \left|\frac{m(\dd)}{\binom{n}{2}p}  - 1\right| < \delta_n \right\} \ .
  \end{equation}
  \begin{lemma}
    \label{lem:goodness}
    Suppose that $n^{-1-1/r}\ll p \le n^{-1/r}$.  
    For every constant $C>1$, we have
    \begin{equation}
      \label{eq:bad_sequences}
      \prdp{\overline{\cR_{C,n}}} \le \e^{-(C + o(1))\Phi_n} \quad\text{and}\quad \prbp{\overline{\cR_{C,n}}} \le \e^{-(C + o(1))\Phi_n}.
    \end{equation}
  \end{lemma}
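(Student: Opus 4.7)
\medskip\noindent\textbf{Proof plan.}
The bad event $\overline{\cR_{C,n}}$ decomposes into the union of two sub-events: some vertex has degree exceeding $\Delta_n$, and the edge count $m(\dd)$ deviates from $\binom{n}{2}p$ by more than a factor of $\delta_n$. I will bound the probability of each sub-event by $\e^{-(C + o(1))\Phi_n}$ separately and finish by a union bound. The argument proceeds in parallel for $\prdp{\cdot}$ and $\prbp{\cdot}$ because the degrees share the $\Bin(n-1,p)$ marginal distribution under both measures, and the total edge count is in each case a sum of independent indicators (distributed as $\Bin(\tbin{n}{2},p)$ under $\prdp{\cdot}$ and such that $\sum_i X_i \sim \Bin(n(n-1),p)$ under $\prbp{\cdot}$).

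For the maximum-degree bound, I apply the weak Chernoff inequality \eqref{eq:weak_Chernoff} to a single marginal $\Bin(n-1,p)$ to get $\prob{d_1 \ge \Delta_n} \le \exp(-\Delta_n \log(\Delta_n/\e np))$. Writing $t := \log n + \Phi_n$ and $u := t/np$, the definition in \eqref{eq:def_deltas} reads $\Delta_n = Ct/\log u$. Conditional on $u \to \infty$, one computes $\log(\Delta_n/np) = \log u + \log C - \log\log u = (1+o(1))\log u$, whence $\Delta_n \log(\Delta_n/\e np) \sim Ct$. A union bound over the $n$ vertices then yields
\[
\prob{\max_i d_i > \Delta_n} \le n \e^{-(C + o(1))t} = \e^{-(C-1+o(1))\log n - (C + o(1))\Phi_n} \le \e^{-(C+o(1))\Phi_n},
\]
using $C > 1$ to discard the surplus $\log n$ term.

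For the edge-count bound, note that $\delta_n \to 0$ throughout the regime (since $\Phi_n/\binom{n}{2}p \to 0$, verified by inspection of the two sub-cases of \eqref{eq:Phi_order}), so the sharp form of Chernoff's inequality with asymptotic constant $1/2$ applies. By the choice $\delta_n^2 = 2C\Phi_n/\binom{n}{2}p$ we obtain
\[
\prob{\bigl|m(\dd)/\tbin{n}{2}p - 1\bigr| \ge \delta_n} \le 2\exp\bigl(-(1+o(1))\delta_n^2 \tbin{n}{2}p/2\bigr) = 2\e^{-(1+o(1))C\Phi_n},
\]
and likewise for the i.i.d.\ model (where the relative deviation of $\sum_i X_i$ is the same as that of $m$). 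Combining the two sub-event estimates completes the proof.

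The main technical obstacle is the routine but case-sensitive verification that $u = (\log n + \Phi_n)/np \to \infty$ and $\Phi_n/\binom{n}{2}p \to 0$ uniformly across the hypothesis $n^{-1-1/r} \ll p \le n^{-1/r}$. Both follow by splitting on the two branches of \eqref{eq:Phi_order}: in the first branch $\Phi_n/np = n^r p^{r-1}$, which, using $p \gg n^{-1-1/r}$, exceeds a positive power of $n$, while in the second branch $\Phi_n/np = n^{1/r}\log n$; analogous elementary estimates show that the corresponding ratios $\Phi_n/\binom{n}{2}p$ tend to zero.
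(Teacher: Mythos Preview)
Your proposal is correct and follows essentially the same approach as the paper: split $\overline{\cR_{C,n}}$ into the maximum-degree event and the edge-count event, handle the first via \eqref{eq:weak_Chernoff} together with the computation $\Delta_n\log(\Delta_n/\e np)\sim C(\log n+\Phi_n)$ (which rests on verifying $(\log n+\Phi_n)/np\to\infty$ by the two-branch case analysis of \eqref{eq:Phi_order}), and handle the second via the standard quadratic Chernoff bound after checking $\delta_n\to 0$. The paper carries out exactly these steps; the only cosmetic difference is that it records the case analyses explicitly as displays \eqref{eq:Phi_np} and \eqref{eq:delta_small}.
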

  \begin{proof}
    Definition \eqref{eq:Phi_order} of $\Phi_n$ gives that
    \begin{equation}
      \label{eq:Phi_np}
      \frac{\Phi_n}{np} \asymp \left.
    \begin{cases}
      n^{r}p^{r-1} \quad &\text{if } n^{-1-1/r} \ll p \le n^{-1 - 1/r} \log^{\frac{1}{r-1}} n \\
      n^{1/r}\log n \quad &\text{if } n^{-1 - 1/r} \log^{\frac{1}{r-1}} n < p \le n^{-1/r} .
    \end{cases} \right\} \gg 1 \ .
    \end{equation}
    Therefore $\frac{\log n + \Phi_n}{np} \to \infty$. For $X \sim \Bin(n-1,p)$, Chernoff's inequality \eqref{eq:weak_Chernoff} implies
    \begin{align*}
      -\log \prob{X > \Delta_n} &\ge  \Delta_n \log \frac{\Delta_n}{\e (n-1)p} = \Delta_n \left( \log \frac{\log n + \Phi_n}{np} - \log \log \frac{\log n + \Phi_n}{np} + \log \frac{Cn}{\e(n-1)} \right) \\
\justify{\eqref{eq:Phi_np}}      &\sim  \Delta_n \log \frac{\log n + \Phi_n}{np} = C(\log n + \Phi_n).
    \end{align*}
    As $C > 1$, we obtain $\prob{X > \Delta_n} \le \frac{1}{n}\e^{-(C+o(1))\Phi_n}$.
    Applying the union bound over $n$ events, we get that the probability of $\max_{i \in [n]} d_i > \Delta_n$ is at most $\e^{-(C + o(1))\Phi_n}$ for both $\prd{p}{}$ and $\prb{p}{}$.

    Turning to the deviations of $m(\dd)$, note that
    \begin{equation}
      \label{eq:delta_small}
      \delta_n^2 = \frac{2C\Phi_n}{\binom{n}{2}p} \asymp
      \left.\begin{cases}
      n^{r-1}p^{r-1} \quad &\text{if }p \le n^{-1 - 1/r} \log^{\frac{1}{r-1}} n \\
      n^{1/r-1}\log n \quad &\text{if }n^{-1 - 1/r} \log^{\frac{1}{r-1}} n < p \le n^{-1/r}  
  \end{cases} \right\}\ll 1 \ .
    \end{equation}
    Since $m(\dd) \sim \Bin(\binom{n}{2},p)$ under $\prd{p}{}$, a version of Chernoff's bound (say \citer{JLRbook}{Theorem~2.1}) gives
    \begin{align*}
      \prdp{\left|m(\dd) - \tbin{n}{2}p\right| \ge \delta_n \tbin{n}{2} p } \le  2 \exp \left(  - \frac{\delta_n^2 \binom{n}{2}p}{2 + \delta_n/3} \right)  = 2 \e^{- \tfrac{1}{2}\delta_n^2 \tbin{n}{2}p (1 + o(1))} = \e^{ -(C + o(1))\Phi_n} 
    \end{align*}
    and since $2m(\dd) \sim \Bin((n)_2,p)$ under $\prb{p}{}$,
    \begin{align*}
      \prbp{\left|2m(\dd) - (n)_2p\right| \ge \delta_n (n)_2 p } \le  2 \exp \left(  - \frac{\delta_n^2 (n)_2p}{2 + \delta_n/3} \right)  = 2 \e^{- \tfrac{1}{2}\delta_n^2 (n)_2p (1 + o(1))} = \e^{ -(2C + o(1))\Phi_n} \ .
    \end{align*}
    This, together with what we have shown about the maximum degree, proves \eqref{eq:bad_sequences}.
  \end{proof}
 
  In the following lemma, the condition $p \ll n^{-2/3 - 4/(3r)}$ stems from the condition \eqref{eq:enum_cond}. In this paper, we will apply Lemma~\ref{lem:reduction} under a stronger assumption $p \ll n^{o(1) - 1 - 1/r}$ (for $r\ge2$).
  \begin{lemma}
    \label{lem:reduction}
    Fix an integer $r \ge 2$ and $C > 0$. Let $\cR_n = \cR_{C,n}$ and consider an arbitrary set $\cS_n \subseteq \left\{ 0, \dots, n-1 \right\}^n$. If $n^{-1-1/r} \ll p \ll n^{-2/3 - 4/(3r)}$ then with $\delta_n = \delta_{C,n}$ defined in \eqref{eq:def_deltas},
    \begin{equation}
      \label{eq:reduct_upper}
      \prdp{\cR_n \cap \cS_n} \le O\left(\delta_n \tbin{n}{2}p\right)\max_{p' \in [(1-\delta_n)p, (1 + \delta_n)p]} \prb{p'}{\left(\cS_n\right)}.
    \end{equation}

  Moreover, defining $\cT_n :=\{\dd \in \{0, \dots, n-1\}^n : \sum_i d_i^2 \le 2n^2p\}$, we have
    \begin{equation}
      \label{eq:reduct_lower}
      \prdp{\cR_n \cap \cS_n \cap \cT_n} \ge \Omega(1) \cdot \left(   \prbp{\cS_n} - \prbp{\overline{\cR_n} \cup \overline{\cT_n}} \right) .
    \end{equation}
  \end{lemma}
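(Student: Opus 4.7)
The plan is to use the McKay--Wormald formula \eqref{eq:enum} to obtain a pointwise comparison between $\prdp{\dd}$ and $\prb{p'}{(\dd)}$ on $\cR_n$, with $p'$ chosen differently for each direction. A short preliminary check, unpacking \eqref{eq:def_deltas} and \eqref{eq:Phi_order}, shows that the hypothesis $p \ll n^{-2/3 - 4/(3r)}$ forces $\Delta_n \ll (\binom{n}{2}p)^{1/4}$; combined with $m(\dd) \sim \binom{n}{2}p$ and $\max_i d_i \le \Delta_n$ on $\cR_n$, this ensures that the condition \eqref{eq:enum_cond} holds uniformly and \eqref{eq:enum} applies.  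Cancelling $\prod_i \binom{n-1}{d_i}$ against the identity $\prb{p'}{(\dd)} = \prod_i \binom{n-1}{d_i}(p')^{2m}(1-p')^{n(n-1)-2m}$ and regrouping the remaining exponentials in terms of $\lambda = \lambda(\dd)$ then yields, for every graphical $\dd \in \cR_n$ and any $p' \in (0,1)$,
\begin{equation}
  \label{eq:RN}
    \frac{\prdp{\dd}}{\prb{p'}{(\dd)}} \sim \sqrt{2}\,\e^{1/4 - (\gamma_n(\dd)/(2\lambda(1-\lambda)))^2}\exp\!\bigl(\tbin{n}{2}(2H_{p'}(\lambda) - H_p(\lambda))\bigr).
\end{equation}
Substituting $p' = \lambda$ collapses the last exponent to $-\binom{n}{2}H_p(\lambda)$, and $p' = p$ to $+\binom{n}{2}H_p(\lambda)$.

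For the upper bound \eqref{eq:reduct_upper} I would take $p' = 2m(\dd)/(n(n-1)) = \lambda$; then \eqref{eq:RN} is $O(1)$, because both $H_p(\lambda) \ge 0$ and the $\gamma_n$-square are non-negative (and any non-graphical $\dd$ is vacuous since $\prdp{\dd} = 0$). To handle the $\dd$-dependence of $p'$, I partition $\cR_n \cap \cS_n$ according to the integer value of $m(\dd)$: the second condition in \eqref{eq:def_cR} confines $m(\dd)$ to an interval of length $2\delta_n\binom{n}{2}p$, producing $O(\delta_n\binom{n}{2}p)$ non-empty buckets, each with its own $p'_m \in [(1-\delta_n)p, (1+\delta_n)p]$. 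Pulling the $O(1)$ out of each bucket, replacing $\prb{p'_m}{(\cdot)}$ by the supremum over the admissible range, and summing over buckets gives \eqref{eq:reduct_upper}.

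For the lower bound \eqref{eq:reduct_lower} I would take $p' = p$ in \eqref{eq:RN}: the resulting ratio is $\Omega(1)$ for every graphical $\dd \in \cR_n \cap \cT_n$, since the $H_p$-exponential is $\ge 1$, the defining condition of $\cT_n$ gives $\gamma_n \le 2n^2 p/(n-1)^2 = O(p)$, and that of $\cR_n$ gives $\lambda \sim p$, so $(\gamma_n/(2\lambda(1-\lambda)))^2 = O(1)$. Summing the pointwise bound over graphical $\dd \in \cR_n \cap \cS_n \cap \cT_n$ and then using the elementary inclusion $\prb{p}{(\cR_n \cap \cS_n \cap \cT_n)} \ge \prbp{\cS_n} - \prbp{\overline{\cR_n} \cup \overline{\cT_n}}$ delivers \eqref{eq:reduct_lower}; the constant-factor loss from restricting $\mu_p$ to graphical sequences --- required because $\prd{p}{}$ is supported only on graphical $\dd$ --- is absorbed into the $\Omega(1)$ constant.

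The main technical point is the preliminary verification of \eqref{eq:enum_cond} on $\cR_n$, which is precisely what pins down the exponent $2/3 + 4/(3r)$ in the hypothesis. After that the argument is essentially bookkeeping: the design of $\cT_n$ is dictated by the need to bound the $\gamma_n$-prefactor of \eqref{eq:RN} in the lower bound, and the bucketing in the upper bound compensates for the $\dd$-dependence of the natural matching $p' = \lambda(\dd)$.
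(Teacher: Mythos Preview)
Your plan is correct and matches the paper's proof essentially line for line: the paper also first verifies \eqref{eq:enum_cond} on $\cR_n$ from the hypothesis on $p$, then for the upper bound takes $p' = \lambda(\dd)$, bounds the ratio by $O(1)$ via $H_p \ge 0$, and buckets by the value of $m(\dd)$, while for the lower bound it takes $p' = p$, uses $\cT_n$ to get $\gamma_n/(2\lambda(1-\lambda)) \le 1 + o(1)$ and $H_p \ge 0$ to get the ratio $\Omega(1)$. Your unified identity \eqref{eq:RN} is a tidy way to package both directions at once, and your explicit remark about graphical versus non-graphical $\dd$ is a point the paper passes over in silence.
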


  \begin{proof}
    First we show that in the assumed range of \( p \) condition \eqref{eq:enum_cond} is satisfied by every  \( \dd \in \cR_n \) and therefore the graph enumeration formula \eqref{eq:enum} applies. 
    
    Recall the definition of $\Delta_n = \Delta_{C,n}$ from \eqref{eq:def_deltas}. Since $\delta_n \to 0$ (see \eqref{eq:delta_small}), $\dd \in \cR_n$ implies $m(\dd) \asymp n^2p$, and thus it suffices to check that \(\Delta_n \ll (n^2 p)^{1/4} \). We consider cases (i) $n^{-1-1/r} \le p \le n^{-1-1/r}\log^{1/(r-1)}n$ and (ii) $p > n^{-1-1/r}\log^{1/(r-1)}n$. In the case (i) we have \(\Phi_n = n^{r+1}p^r = O(\log^{r/(r-1)}n) \) and \( \log n + \Phi_n \gg np\). So the denominator of $\Delta_n$ tends to infinity and we obtain that 
    \[
      \Delta_n \ll \log n+\Phi_n = O(\log^{r/(r-1)} n) \ll  (n^2 p)^{1/4} \ .
    \] 
    In the case (ii), \( \Phi_n = n^{1+1/r} p \log n \gg \log n\), which implies \(\log ((\Phi_n+ \log n)/(np))\asymp \log n\). Hence 
    \[
      \Delta_n \asymp \frac{n^{1+1/r}p \log n}{\log n} = n^{1+1/r}p^r,
    \]
    which can be seen to be $o( (n^2p)^{1/4})$ due to $p \ll n^{-\frac{2}{3} - \frac{4}{3r}}$.

    Given $\dd \in \cR_n$, write $m = \frac{1}{2}\sum_i d_i$ and $\lambda = {m}/{{\binom{n}{2}}}$. Since in $\G(n,p)$ the probability of every graph with degrees $\dd$ is $p^m(1-p)^{\binom{n}{2}-m}$, we have $\prdp{\dd} = g(\dd) p^m(1-p)^{\binom{n}{2}-m}$. Hence, using formula \eqref{eq:enum}, we obtain 
    \begin{align}
    \notag \frac{\prdp{\dd}}{\prb{\lambda}{\left(\dd\right)}} 
      &= \frac{g(\dd)p^m(1-p)^{{\binom{n}{2}} - m} }{ \prod_{i=1}^n\binom{n-1}{d_i}\lambda^{d_i}(1-\lambda)^{n-1-d_i}} \\
      \label{eq:DpBlambda_ratio} &= \frac{g(\dd)p^m(1-p)^{{\binom{n}{2}} - m}}{\lambda^{2m}(1-\lambda)^{2{\binom{n}{2}}-2m}\prod_{i=1}^n\binom{n-1}{d_i}} \\
      \justify{\eqref{eq:Hp_def}, $H_p \ge 0$} \notag &\le O(1) \cdot \left( \left(\frac{p}{\lambda}  \right)^\lambda \left( \frac{1-p}{1-\lambda} \right)^{1-\lambda} \right)^{\binom{n}{2}} = O\left( \e^{-{\binom{n}{2}} H_p(\lambda)} \right) = O(1).
    \end{align}
    Let $M_n := \left\{ m \in \Z : \left|\frac{m}{\binom{n}{2}p} - 1\right| \le \delta_n \right\}$.
    Since $\dd \in \cR_n$ implies $m(\dd) \in M_n$, we obtain
    \begin{equation*}
      \prdp{\cR_n \cap \cS_n} 
      \le O(1) \cdot \sum_{m \in M_n} \sum_{\dd \in \cR_n \cap \cS_n : m(\dd) = m}\prb{m/{\binom{n}{2}}}{\left(\dd\right)} \le O(1)\sum_{m \in M_n  }\prb{m/{\binom{n}{2}}}{\left(\cS_n\right)} \ .
    \end{equation*}
    Since $|M_n| = O(\delta_n {\binom{n}{2}} p)$ and for $m \in M_n$ we have $m/\binom{n}{2} = (1 \pm \delta)p$, inequality \eqref{eq:reduct_upper} follows.

    To prove inequality \eqref{eq:reduct_lower}, we first bound the exponent involving $\gamma_n$ in \eqref{eq:enum} for sequences $\dd \in \cR_n \cap \cT_n$. Since $\dd \in \cT_n$ we have 
    \[
      \gamma_n(\dd) \le (n-1)^{-2}\sum_{i = 1}^n d_i^2 \le (2+ o(1)) p.
    \]
    Further, since $\dd$ satisfies the second inequality in \eqref{eq:def_cR}, and $\delta_n \to 0$ (see \eqref{eq:delta_small}) we have $\lambda(\dd) \sim p \to 0$, whence 
    \begin{equation}
      \label{eq:gamma_bound}
     \frac{\gamma_n(\dd)}{2\lambda(1-\lambda)} \le (1 + o(1)) \frac{2p}{2p(1-o(1))} \sim 1.
     \end{equation}
    Repeating the steps leading to equality \eqref{eq:DpBlambda_ratio} we obtain (note that we use $\mathrm{B}_p$ instead of $\mathrm{B}_\lambda$) 
    \begin{align*}
      \frac{\prdp{\dd}}{\prbp{\dd}} 
      &= \frac{g(\dd)p^m(1-p)^{{\binom{n}{2}} - m}}{p^{2m}(1-p)^{2{\binom{n}{2}}-2m}\prod_{i=1}^n\binom{n-1}{d_i}} = \frac{g(\dd)}{p^{m}(1-p)^{{\binom{n}{2}}-m}\prod_{i=1}^n\binom{n-1}{d_i}}\\
      \justify{\eqref{eq:enum}} &\sim \sqrt{2} \exp \left(\tfrac{1}{4}-\left(\frac{\gamma_n}{2\lambda(1-\lambda)}\right)^2 \right)  \left( \left(\frac{\lambda}{p}  \right)^\lambda \left( \frac{1-\lambda}{1-p} \right)^{1-\lambda} \right)^{\binom{n}{2}} \\
      \justify{\eqref{eq:gamma_bound}, $H_p \ge 0$}   &\asymp \e^{{\binom{n}{2}} H_p(\lambda)}  = \Omega(1).
    \end{align*}
    Consequently $\prdp{\cR_n \cap \cS_n \cap \cT_n} = \Omega\left( \prbp{\cR_n \cap \cS_n \cap \cT_n} \right)$ from which \eqref{eq:reduct_lower} easily follows.
\end{proof}

  \section{Upper tail for i.i.d.\ binomials}
  \label{s:iid}
  In this section, fixing $r \ge 2$ and given a sequence of integers $N = N(n)$, we consider the random variable 
  \[ 
    Y = \tbin{X_1}{r} + \dots + \tbin{X_n}{r},
  \]
  where $X_i$ are independent random variables with distribution $\Bin(N,p)$. 
  Let $\nu = \E Y = n \binom{N}{r}p^r$.
  Focusing on the setting when $\nu \to \infty$ but $Np \to 0$, our goal is to obtain  the asymptotics of
  \[
    \log \prob {Y \ge (1 + \eps) \nu}.
  \]
  Moreover, in a couple of situations the proof will easily extend to the random variable $X$ counting $r$-stars in $\G(n,p)$, giving the same log-probability asymptotics for the sequence $N(n) = n-1$.

  Our approach is to split the terms of $Y$ according to some cutoff value $R = R(n)$. We define
    \begin{equation}
\label{eq:Y_primes}
      Y' = \sum_{i \in [n] : X_i \le R} \binom{X_i}{r}, \qquad  Y'' = \sum_{i \in [n] : X_i > R} \binom{X_i}{r},
    \end{equation}
    so that $Y = Y' + Y''$.

  \subsection{Small values}
  First we deal with the contribution of the small values which are expected to be the leading contributor in the Poisson regime.
\begin{lemma}
  \label{lem:small_value_sum_LDP}
  Suppose that $Np \to 0$ and $\nu \to \infty$. 
  If the sequence $R = R(n)$ satisfies
  \begin{align}
\label{eq:R_def}
r \le R \ll \left( \log \frac{1}{Np} \right)^{1/(r-1)} ,
  \end{align}
  then for any constant $\eps > 0$ the random variable $Y'$ defined in \eqref{eq:Y_primes} satisfies
    \begin{equation*}
      \log \prob{Y' \ge (1 + \eps)\nu} \sim -\phi(\eps)\nu \ .
    \end{equation*}
\end{lemma}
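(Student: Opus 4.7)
The plan is to prove matching bounds $\log \prob{Y' \ge (1+\eps)\nu} = -(1+o(1))\phi(\eps)\nu$. For the upper bound I would apply the exponential Markov inequality with tilting parameter $t := \log(1+\eps)$. Writing $Z_i := \binom{X_i}{r}\ind{\{X_i \le R\}}$ so that $Y' = \sum_i Z_i$, expand
\begin{equation*}
  \E \e^{tZ_i} = 1 + \sum_{k=r}^R \bigl((1+\eps)^{\binom{k}{r}} - 1\bigr) \prob{X_i = k}.
\end{equation*}
The $k = r$ term contributes $\eps \prob{X_i = r} \sim \eps \binom{N}{r}p^r$ (using $(1-p)^{N-r} \to 1$, which follows from $Np \to 0$). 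Granted the estimate that the $k \ge r+1$ contributions sum to $o(\binom{N}{r}p^r)$, one gets $\E \e^{tZ_i} \le \exp(\eps\binom{N}{r}p^r(1+o(1)))$, hence by independence $\E \e^{tY'} \le \e^{\eps\nu(1+o(1))}$, and Markov delivers $\log \prob{Y' \ge (1+\eps)\nu} \le -(1+\eps)\log(1+\eps)\,\nu + \eps\nu(1+o(1)) = -\phi(\eps)\nu + o(\nu)$.

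Bounding the $k \ge r+1$ tail of the MGF sum is the main obstacle. Using the crude bound $\prob{X_i = k} \le (eNp/k)^k$, each summand is at most $(e/k)^k \cdot \exp\bigl(\binom{k}{r}\log(1+\eps) - k\log(1/(Np))\bigr)$. The hypothesis \eqref{eq:R_def} forces $\binom{k}{r}\log(1+\eps) = o\bigl(k \log(1/(Np))\bigr)$ uniformly in $r < k \le R$ (since $\binom{k}{r}/k \asymp k^{r-1} \le R^{r-1} \ll \log(1/(Np))$), so each term is bounded by $\bigl(e(Np)^{1-o(1)}/k\bigr)^k$. Since $Np \to 0$, this decays geometrically, the sum is dominated by $k = r+1$, and equals $O((Np)^{(r+1)(1-o(1))}) = o((Np)^r)$, which is what is needed. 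The threshold in~\eqref{eq:R_def} is sharp here: if $R^{r-1}$ were allowed to match $\log(1/(Np))$, the $k = R$ term would become comparable to the $k = r$ term and the argument would break.

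For the lower bound, observe the pointwise inequality $\binom{X_i}{r}\ind{\{X_i \le R\}} \ge \ind{\{r \le X_i \le R\}}$, so that $Y' \ge W := |\{i : r \le X_i \le R\}| \sim \Bin(n, q)$ with $q := \prob{r \le X_i \le R} \sim \binom{N}{r}p^r \sim \nu/n$ (the contribution $\prob{X_i > R}$ is smaller than $\prob{X_i = r}$ by a factor $O(Np)$). Set $k := \ceil{(1+\eps)\nu}$, so that $k, n-k \to \infty$ and $k/(nq) \to 1+\eps$. A careful Stirling estimate for the single binomial probability gives
\begin{equation*}
  \log \prob{W = k} = -n H_q(k/n) + O(\log \nu),
\end{equation*}
with $H_q$ as in \eqref{eq:Hp_def}. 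A Taylor expansion of $H_q$ around $\lambda = q$, using $q \to 0$ and $k/n = (1+\eps)q(1+o(1))$, yields $H_q(k/n) = q\phi(\eps)(1+o(1))$ and hence $nH_q(k/n) \sim \phi(\eps)\nu$; since $\log \nu = o(\nu)$ absorbs the Stirling error, we conclude $\log \prob{Y' \ge (1+\eps)\nu} \ge \log \prob{W = k} \ge -(1+o(1))\phi(\eps)\nu$, matching the upper bound.
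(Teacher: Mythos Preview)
Your proof is correct and follows essentially the same strategy as the paper's: the exponential Markov bound with tilting parameter $\log(1+\eps)$ for the upper bound, showing that the moment generating function is $1 + (\eps+o(1))\binom{N}{r}p^r$ via a geometric-decay argument on the terms $k > r$ driven by the hypothesis \eqref{eq:R_def}; and a binomial minorant for the lower bound. The only cosmetic differences are that the paper controls the ratio of consecutive terms $(1+\eps)^{\binom{i}{r}}\zeta_i$ directly rather than passing through the Chernoff-type estimate $\prob{X_i=k}\le (eNp/k)^k$, and for the lower bound the paper uses the slightly smaller minorant $S=\sum_i \ind{\{X_i=r\}}$ and invokes a standard large-deviation fact instead of carrying out the Stirling computation explicitly.
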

\begin{proof}
  Observe that $Y' = \sum_{i=1}^n \binom{Z_i}{r}$, 
where $Z_i := X_i \ind{\{X_i \le R\}}$ are i.i.d.\ and have distribution obtained from $\Bin(N,p)$ by moving the mass from values greater than $R$ to value $0$. In other words, defining $\zeta_j := \prob{Z_1 = j}$ we have
\begin{equation*}
  \zeta_0 = \prob{X_1 = 0} + \prob{X_1 > R}, \quad \text{and }  \quad \zeta_j = \prob{X_1 = j}, \ j= 1, \dots, R \ .
\end{equation*}
Starting with the upper bound, let $t := (1 + \eps)\binom{N}{r}p^r$. For every $h > 0$, using independence we get
 \begin{align}
   \notag \prob{Y' \ge (1 + \eps)\nu} &= \prob{Y' \ge nt} \le \E \exp({h(Y' - nt)}) = \left( \E \exp \left( h\left[\textstyle{\binom{Z_1}{r}} - t \right]\right) \right)^n \\
   \label{eq:exp_mom_upper} &= \left( \frac{\sum_i \e^{h\binom{i}{r}} \zeta_i}{\e^{ht}} \right)^n = \exp \left( n (\Lambda(h) - ht)  \right),
 \end{align}
 where $\Lambda(h) := \log \E \e^{h\binom{Z_1}{r}}$. This upper bound could be optimized over $h > 0$, leading to equation
 \[
   t = \Lambda'(h) = \frac{\E \binom{Z_1}{r}\e^{h\binom{Z_1}{r}}}{\E \e^{h\binom{Z_1}{r}}} = \frac{\sum_i \binom{i}{r} \e^{h\binom{i}{r}} \zeta_i}{\sum_i \e^{h\binom{i}{r}} \zeta_i}.
 \]
 It is tricky to find the exact solution of this equation, except for the case $R = r$ which gives $h = h_{\eps} := \log (1 + \eps)$. Luckily, this value of $h$ will turn out to give a sufficiently good bound for any $R$ satisfying \eqref{eq:R_def}. Claiming that
 \begin{align}
   \label{eq:momgen_est} 
   \E \e^{h_\eps \binom{Z_1}{r}}  = \sum_{i = 0}^R (1 + \eps)^{\tbin{i}{r}}\zeta_r \le  1 +\left( \eps + o(1) \right) \tbin{N}{r}p^r ,
 \end{align}
 and using $\log (1 + x) \le x$ we obtain
 \begin{align*}
   \Lambda(h_\eps) -h_\eps t  &\le 
   \log \left( 1 +\left( \eps + o(1) \right) \tbin{N}{r}p^r \right) - \log(1 + \eps) \cdot {(1 + \eps) \tbin{N}{r}p^r }\\
   & \le (\eps + o(1))\tbin{N}{r}p^r - (1 + \eps)\log(1 + \eps)\tbin{N}{r}p^r \sim - \phi(\eps)\tbin{N}{r}p^r,
 \end{align*}
 which we substitute into \eqref{eq:exp_mom_upper} to obtain the desired upper bound. 

 To see why inequality \eqref{eq:momgen_est} holds, note that, for $X \sim \Bin(N,p)$,
 \begin{align}
\notag   \sum_{i = 0}^{r-1} (1 + \eps)^{\tbin{i}{r}} \zeta_i = \sum_{i = 0}^{r-1} \zeta_i = 1 - \prob{X \ge r} + \prob{X \ge R + 1} &\le 1 - \tbin{N}{r}p^r(1-p)^{N-r} + \tbin{N}{R+1}p^{R+1} \\
\label{eq:smaller_upper} \justify{$Np \to 0$}&= 1 - \tbin{N}{r}p^r \left( 1 + o(1) \right)
 \end{align}
We bound the remaining terms in \eqref{eq:momgen_est} by a geometric series after checking that the ratio of consecutive terms tends to zero. For $i = r, \dots, R-1$,
 \begin{align*}
   \notag \frac{(1 + \eps)^{\binom{i+1}{r}}\zeta_{i+1}}{(1 + \eps)^{\binom{i}{r}}\zeta_{i}} &= (1 + \eps)^{\binom{i+1}{r} - \binom{i}{r}} \frac{(N-i)p}{(i+1)(1-p)} \le (1 + \eps)^{i^{r-1}}Np \\
   \label{eq:ratio_small} &\le \exp \left( R^{r-1} \log (1 + \eps) \right)Np \eqByRef{eq:R_def} (Np)^{1 - o(1)} = o(1) 
 \end{align*}
 and therefore
 \[
   \sum_{i = r}^R (1 + \eps)^{\binom{i}{r}}\zeta_{i} \le \frac{(1 + \eps)\zeta_{r}}{1 - o(1)} \sim (1 + \eps) \zeta_r \ .
 \]
 Since $\zeta_r \sim \binom{N}{r}p^r$, together with \eqref{eq:smaller_upper} this implies the inequality in \eqref{eq:momgen_est}.

 For the lower bound we note that
 \begin{equation}  
\label{eq:lower}
   \prob{Y' \ge (1 + \eps)\nu} \ge \prob{S \ge (1 + \eps)\nu}, \qquad S := \sum_{i=1}^n \ind{\{X_i = r\}} \sim \Bin(n, \zeta_r).
 \end{equation}
 Since $\zeta_r \to 0$ and $\E S = n\zeta_r \sim \nu \to \infty$, by a well-known large deviations fact (see \citer{DenHollander}{Exercise III.10} or \citer{AGSW23}{Proposition~8}) we obtain that
 \[
   \log \prob{S \ge (1 + \eps)\nu} \ge -(1+ o(1))\phi(\eps)\nu \ ,
 \]
 which together with~\eqref{eq:lower} gives the lower bound.
 \end{proof}

 \subsection{Large values}
 Further we bound the contribution of the large values. The conditions in the following Lemma look a bit technical, but simplify after noticing that $\log \frac{M}{Np} \asymp \min\left\{ \log n, \log \frac{1}{p} \right\}$. 
\begin{lemma}
  \label{lem:big_value_sum_LDP}
  Let  $M := \min \{ \left( r! \eps \nu \right)^{1/r}, N\}$. Assume that $p \to 0$ and a sequence of positive numbers $\eta = \eta(n) \to 0$ satisfies 
    \begin{equation}
      \label{eq:eta_log} \log \frac{1}{\eta} \ll \log \frac{M}{Np} 
    \end{equation}
    and
    \begin{equation}
      \label{eq:eta_M} \eta M \gg \frac{\log n}{\log \frac{M}{Np}}.
    \end{equation}
    Let $X_1, \dots, X_n$ be independent with distribution $\Bin(N,p)$. 
Then for every constant $\eps > 0$ random variable $Y'' := \sum_{i \in [n]} \binom{X_i}{r}\ind{\{X_i \ge \eta M\}}$ satisfies
    \begin{equation}
      \label{eq:big_value_UT}
      \log \prob{Y'' > \eps \nu} \le - (1 + o(1)) S \log \frac{M}{Np}     
    \end{equation}
   where 
    \[
      S = S(n,N,p,\eps) := \left( \floor{r!\eps \nu/N^r} + \left\{ r! \eps \nu/N^r \right\}^{1/r} \right) N  \ .
    \]
    If, instead, $X_1, \dots, X_n$ are degrees of $\G(n,p)$, then \eqref{eq:big_value_UT} holds with $N = n-1$ under an additional assumption 
    \begin{equation}
      \label{eq:Gnp_assumption}  
      \eta^3 \gg {\frac{\nu}{M^{r+1}}} \ .
    \end{equation}
\end{lemma}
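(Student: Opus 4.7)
The plan is to reduce $\{Y'' > \eps\nu\}$ to a lower-tail–like event for the degree sum $\sum_{i \in \cI} X_i$, where $\cI := \{i : X_i \ge \eta M\}$, and then apply Chernoff-type bounds after a union bound over $|\cI|$. The reduction goes via Proposition~\ref{prop:convex_sum}: setting $f(m) = \binom{m}{r}$, $t = \eps\nu$, and $m_i := X_i \ind{X_i \ge \eta M} \in [0, N]$, the inequality $Y'' > \eps\nu$ gives $\sum_i f(m_i) \ge \eps\nu$, and the proposition forces $\sum_{i \in \cI} X_i \ge \floor{\eps\nu/\tbin{N}{r}} N + b$, where $b = \min\{m : \binom{m}{r} \ge \{\eps\nu/\tbin{N}{r}\}\binom{N}{r}\}$. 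Since $r!\binom{N}{r} = N^r(1 + O(1/N))$, the integer and fractional parts of $\eps\nu/\binom{N}{r}$ agree asymptotically with those of $r!\eps\nu/N^r$, and $b \ge \{r!\eps\nu/N^r\}^{1/r} N$ by inverting $\binom{m}{r} \le m^r/r!$; thus $\{Y'' > \eps\nu\}$ implies $\sum_{i \in \cI} X_i \ge S(1 - o(1))$.

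For the independent case, a union bound over $\cI = I \subset [n]$ with $|I| = k$ gives
\[
  \prob{Y'' > \eps\nu} \le \sum_{k \ge 1} \binom{n}{k} \sup_{|I| = k} \prob{\textstyle\sum_{i \in I} X_i \ge T_k,\; \forall i \in I: X_i \ge \eta M},
\]
where $T_k := \max(S, k\eta M)$ since the condition $\forall i \in I: X_i \ge \eta M$ already forces $\sum_{i \in I} X_i \ge k\eta M$. Under independence $\sum_{i \in I} X_i \sim \Bin(kN, p)$, so \eqref{eq:weak_Chernoff} yields the bound $\exp(-T_k \log(T_k/(ekNp)))$. Hypothesis \eqref{eq:eta_log} ensures $\log(T_k/(ekNp)) \sim \log(M/(Np))$ uniformly in $k$, and the combinatorial prefactor $\binom{n}{k} \le \exp(k\log n)$ is negligible by \eqref{eq:eta_M}: for $k \le S/(\eta M)$, $k\log n \le S\log n/(\eta M) = o(S\log(M/(Np)))$; for $k > S/(\eta M)$, the $k$th summand behaves like $\exp(-k\eta M \log(M/(Np))(1+o(1)))$ and the resulting geometric series is dominated by its first term, which is exactly $\exp(-S\log(M/(Np))(1+o(1)))$.

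In the $\G(n,p)$ setting the $X_i = d_i$ are positively correlated; the key is the decomposition $\sum_{i \in I} X_i = 2 e(G[I]) + e(G[I, I^c])$ into independent Binomials $\Bin(\binom{|I|}{2}, p)$ and $\Bin(|I|(n-|I|), p)$. Since $e(G[I]) \le \binom{k}{2}$ deterministically, the event $\sum_{i \in I} X_i \ge T_k$ implies $e(G[I, I^c]) \ge T_k - k(k-1)$. Hypothesis \eqref{eq:Gnp_assumption} unpacks to $\eta^3 M \gg 1$ when $M < N$ (since then $\nu/M^{r+1} \asymp 1/M$) and to $\eta^3 N \gg r!\eps\nu/N^r$ when $M = N$, which in either case forces $k(k-1) = o(T_k)$ uniformly over $k = o(\eta M)$; applying Chernoff to $e(G[I, I^c]) \ge T_k(1 - o(1))$ then reproduces the iid argument with a $(1+o(1))$ loss in the exponent. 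For $k \gtrsim \eta M$, the event $|\cI| \ge k$ forces $e(G) \ge k\eta M/2 \gtrsim \eta^2 M^2$, and a direct Chernoff on $e(G) \sim \Bin(\binom{n}{2},p)$ gives a bound far smaller than the target. The main obstacle is preserving the sharp constant $1$ in front of $S\log(M/(Np))$; \eqref{eq:Gnp_assumption} is exactly what lets us route the correlated degree sum through the bipartite part $e(G[I, I^c])$, where independence is exact and the weak Chernoff bound is tight.
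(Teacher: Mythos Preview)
Your i.i.d.\ argument is correct and takes a different route from the paper: rather than the paper's union bound over discretised ``profiles'' $(J,\yy)$ with $|J|\le S/(\eta M)$ followed by separate Chernoff bounds on each $\{X_j\ge y_j\}$, you union-bound over the size $k=|\cI|$ and apply a single Chernoff to the aggregate $\sum_{i\in I}X_i\sim\Bin(kN,p)$. Both work; yours is slightly more direct in the independent setting, while the paper's decomposition has the advantage that $|J|$ is automatically bounded by $S/(\eta M)$, which becomes decisive in the $\G(n,p)$ adaptation.

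There is, however, a genuine gap in your $\G(n,p)$ argument for $k\gtrsim\eta M$. The claim that ``a direct Chernoff on $e(G)\sim\Bin\bigl(\binom{n}{2},p\bigr)$ gives a bound far smaller than the target'' fails because the lemma's hypotheses do \emph{not} force $\eta^2 M^2\gg n^2p$. Concretely, take $r=2$, $p=1/n$, $\eta=\log^{-2}n$ (this is exactly the choice made when the lemma is applied to $\G(n,p)$ in the paper). Then $\nu\asymp n$, $M\asymp n^{1/2}$, and one readily checks that \eqref{eq:eta_log}, \eqref{eq:eta_M}, \eqref{eq:Gnp_assumption} all hold; yet $\eta^2M^2\asymp n/\log^4 n\ll n\asymp \E e(G)$, so the event $\{e(G)\ge c\eta^2M^2\}$ has probability tending to $1$ and your bound is vacuous. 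The paper sidesteps this entirely: since its union bound only ranges over $|J|\le S/(\eta M)$, it can replace each target $y_j$ by $y_j-|J|$ (the degree of $j$ into $[n]\setminus J$), and the condition $|J|\ll\eta^2M\le y_j$ needed for this to be a negligible loss is precisely $S\ll\eta^3M^2$, i.e.\ \eqref{eq:Gnp_assumption}. Your route can be salvaged---for instance, bound $\prob{|\cI|\ge k}$ for $k\le\eta M/2$ via out-degrees (each $X_i\ge\eta M$ forces the degree of $i$ into $I^c$ to be at least $\eta M/2$, and these are independent $\Bin(n-k,p)$), yielding $\exp\bigl(-(1-o(1))\tfrac14\eta^2M^2\log\tfrac{M}{Np}\bigr)$, which suffices since $\eta^2M^2\gg S$ by \eqref{eq:Gnp_assumption}; larger $k$ follow by monotonicity---but as written the $e(G)$ step does not go through.

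A minor point: applying Proposition~\ref{prop:convex_sum} with $f(m)=\binom{m}{r}$ and then asserting that ``the integer and fractional parts of $\eps\nu/\binom{N}{r}$ agree asymptotically with those of $r!\eps\nu/N^r$'' is hazardous, since $\lfloor\cdot\rfloor$ and $\{\cdot\}$ are discontinuous. The paper avoids this by using $f(x)=x^r$ (via $\binom{X_i}{r}\le X_i^r/r!$), which yields $\sum_{i\in\cI}X_i\ge S$ exactly. Your conclusion $\ge S(1-o(1))$ is in fact correct, but it requires a short case split on whether the two floors coincide rather than the justification you give.
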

\begin{proof}
  If $Y'' \ge \eps \nu$, then $\sum_{i \in [n]} X_i^r \ind{\{X_i > \eta M \}} \ge r!\eps \nu$. Therefore Proposition~\ref{prop:convex_sum} (with $f(x) = x^r$ and $t = r!\eps\nu$; note that $t \le n f(N)$ for large $n$ because of $p \to 0$) implies
  \begin{equation}
    \label{eq:sum_event}
  \sum_{i \in [n]}  X_i \ind{X_i > \eta M} \ge \left(\floor{r!\eps \nu/N^r} + \left\{ r! \eps \nu/N^r \right\}^{1/r}  \right)N = S.
\end{equation}
Event \eqref{eq:sum_event} is contained in the event that there is a set $J \subset [n]$ and numbers $y_j \ge \eta M, j \in J$ such that $\sum_{j \in J} y_j = S$ and for every $j \in J$ we have $X_j \ge y_j$. To make an efficient union bound over choices of $J$ and $\yy = (y_j : j \in J)$, let us observe that a few assumptions can be made. First, we can assume that numbers $y_j$ satisfy
\[
  y_j \le \min \left\{ S, N \right\} = \left. 
  \begin{cases} 
    (r!\eps \nu)^{1/r} & \text{ if } r! \eps \nu < N^r \\ 
    N &\text{ if } r! \eps \nu \ge N^r \end{cases} \right\} = M \ .
\] 
Note that $y_j \ge \eta M$ and $\sum_{j \in J} y_j = S$ implies $|J| \le S/(\eta M)$. By rounding down numbers $y_j$ to multiples of $\eta^2 M$, we might decrease the sum $\sum_{j \in J} y_j$ by at most $\eta^2 M \cdot \frac{S}{\eta M} = \eta S$ and therefore can assume that numbers $y_j \in (0, M]$, $j \in J$, are multiples of $\eta^2 M$ and satisfy $\sum_j y_j \ge (1 - \eta)S$. Define events $\cE_{J,\yy} := \left\{ X_j \ge y_j \text{ for every } j \in J \right\}$. 
Using that $y_j \ge \eta^2 M$ and \eqref{eq:eta_log}, inequality \eqref{eq:weak_Chernoff} implies
  \begin{equation}
    \label{eq:large_tail_}
    \log \prob{X_j \ge y_j} \le - y_j \log \frac{y_j}{\e Np} \le - y_j \left(\log \frac{M}{Np} + \log \frac{\eta^2}{\e} \right)\sim - y_j \log \frac{M}{Np}.
  \end{equation}
  Therefore independence of $X_1, \dots, X_n$, and $\sum_{j \in J} y_j \ge (1 - \eta)S \sim S$ imply
  \begin{equation}
    \label{eq:EJy}
    \log \prob{\cE_{J,\yy}} \le - (1 + o(1)) \sum_{j \in J} y_j \log \frac{M}{Np} \le - (1 + o(1)) S \log \frac{M}{Np} \ .
  \end{equation}
  There are at most $(n+1)^{S/(\eta M)}$ ways to choose the set $J$ of size at most $S/(\eta M)$ and at most $1/ \eta^2$ ways to choose $y_j \in (0,M]$ as a multiple of $\eta^2 M$. A union bound  and \eqref{eq:EJy} imply
  \begin{align}
    \label{eq:Y_primes_upp} \prob{Y'' \ge \eps \nu} &\le (n+1)^{S/(\eta M)} \cdot \left( \frac{1}{\eta^2} \right)^{S/(\eta M)} \exp \left( - (1 + o(1))S \log \frac{M}{Np} \right) \\
    \label{eq:Y_primes_intermediate} &= \exp \left( \frac{S}{\eta M} \log \frac{n+1}{\eta^2} - (1 + o(1))S \log \frac{M}{Np} \right) \ .
  \end{align}
  To show that the positive term in \eqref{eq:Y_primes_intermediate} is negligible we use conditions \eqref{eq:eta_log} and \eqref{eq:eta_M} to infer
  \begin{align*}
    \frac{1}{\eta M}\log \frac{n + 1}{\eta^2} & = \frac{\log (n+1) }{\eta M} + \frac{2\log \frac{1}{\eta}}{\eta M} \ll \log \frac{M}{Np} + \frac{\log \frac{M}{Np}}{\eta M} \sim \log \frac{M}{Np} \ ,
  \end{align*}
  which, combined with \eqref{eq:Y_primes_intermediate}, gives \eqref{eq:big_value_UT}.

  Finally we consider the setting when $X_1, \dots, X_n$ are degrees of $\G(n,p)$. The only place in the proof where the independence of $X_i$s was used was inequality \eqref{eq:EJy}. We redefine $\cE_{J,\yy}$ to be a more likely event that every vertex $j \in J$ has at least $y_j - |J|$ neighbors outside $J$ (which is an intersection of $|J|$ independent events). In view of the assumptions $y_j \ge \eta^2 M$ and $|J| \le S/(\eta M)$, inequalities \eqref{eq:EJy} remain valid once we check that $S/(\eta M) \ll \eta^2 M$. But this follows from the condition \eqref{eq:Gnp_assumption}, because $S \asymp \max \{\nu^{1/r}, \nu/N^{r-1}\} \asymp \nu M^{-(r-1)}$.

\end{proof}

\subsection{The whole sum in the sparse case}
The main result of this section is Theorem~\ref{thm:iid_UT} giving the asymptotics of the log-probability for i.i.d.\ binomial random variables in a sparse regime.
It comes with Proposition~\ref{prop:minimizers} which describes the minimizers in \eqref{eq:I_def_rep}. The proofs of Theorem~\ref{thm:iid_UT} and Proposition~\ref{prop:minimizers} are postponed until we state and prove a couple of auxiliary facts which are used in the proof of Theorem~\ref{thm:iid_UT}.

Recall from \eqref{eq:I_def} that for $\eps > 0$
  \begin{equation}
\label{eq:I_def_rep}
I_r(c, \eps) := 
\begin{cases}
  \min_{\delta \in [0, \eps]} \left( \phi(\eps - \delta) + \psi_r(\delta)c^{1/r - 1} \right) \quad &\text{if }c > 0 \\
  \phi(\eps) \quad & \text{if }c = 0
\end{cases}
  \end{equation}
The condition $p \le n^{o(1) - 1 - 1/r}$ in the following theorem could be relaxed, but it somewhat simplifies the proof as it allows to choose $R$ in Lemma~\ref{lem:small_value_sum_LDP} and $\eta$ in Lemma~\ref{lem:big_value_sum_LDP} so that $R = \eta M$. For higher $p$ we would have $R \ll \eta M$, which would require to show that the contribution from intermediate values in $(R, \eta M)$ is negligible.
\begin{theorem}
  \label{thm:iid_UT}
  Assume that $N = n-1$. If $n^{-1-1/r} \ll p \le n^{o(1)-1-1/r}$, then for every constant $\eps > 0$, with functions $\psi_r$ and $I_r$ defined in Theorem~\ref{thm:stars_UT},
  \begin{equation}
\label{eq:whole_UT}
-\log \prob{Y \ge (1 + \eps)\nu} \sim 
      \begin{cases}
	I_r(c,\eps)\nu  \quad &\text{if } \nu/\log^{r/(r-1)} n \to c \in [0, \infty) \\
	\psi_r(\eps)\nu^{1/r} \log n\quad &\text{if } \nu \gg \log^{r/(r-1)} n  
      \end{cases}\ .
  \end{equation}
\end{theorem}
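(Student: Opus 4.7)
The plan is to combine Lemma~\ref{lem:small_value_sum_LDP} (small values) and Lemma~\ref{lem:big_value_sum_LDP} (large values) with an enumeration over ``large-value'' configurations. First I fix parameters: set $M := (r! \eps \nu)^{1/r}$, which satisfies $M \ll N = n-1$ in the sparse regime $p \le n^{o(1)-1-1/r}$, and pick $R = R(n) \to \infty$ slowly, e.g.\ $R \asymp \log\log n$, so that both $R \ll \log(1/(Np))^{1/(r-1)}$ (the hypothesis of Lemma~\ref{lem:small_value_sum_LDP}) and $\eta := R/M$ satisfy \eqref{eq:eta_log}--\eqref{eq:eta_M}. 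A short calculation using $p \le n^{o(1)-1-1/r}$ yields $\log(M/(Np)) \sim r^{-1}\log n$, so that the ``large-value cost'' $S\log(M/(Np))$ supplied by Lemma~\ref{lem:big_value_sum_LDP} becomes $\psi_r(\delta)\nu^{1/r}\log n$; the identity $\psi_r(\delta)\nu^{1/r}\log n = (1+o(1))\psi_r(\delta)c^{1/r-1}\nu$ bridges the two cases.

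For the lower bound I let $\delta^* \in [0,\eps]$ achieve the minimum in~\eqref{eq:I_def_rep}, set $m := \ceil{(r!\delta^*\nu)^{1/r}}$, and consider the two events $\cE_1 := \{X_1 \ge m\}$ and $\cE_2 := \{\sum_{i=2}^n \ind{X_i = r} \ge (1+\eps-\delta^*+o(1))\nu\}$, which depend on disjoint coordinates and are therefore independent. Inequality~\eqref{eq:binom_lower} gives $\prob{\cE_1} \ge \exp(-(1+o(1))\psi_r(\delta^*)\nu^{1/r}\log n)$, while the Chernoff-type LDP for $\Bin(n-1,\zeta_r)$ with $\zeta_r \sim \binom{N}{r}p^r$ (as invoked at the end of the proof of Lemma~\ref{lem:small_value_sum_LDP}) gives $\prob{\cE_2} \ge \exp(-(1+o(1))\phi(\eps-\delta^*)\nu)$. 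On $\cE_1 \cap \cE_2$ one has $Y \ge \binom{m}{r} + \sum_{i\ge 2}\ind{X_i=r} \ge (1+\eps)\nu$ (using $\binom{m}{r} \ge (1-o(1))\delta^*\nu$), so multiplying the two probabilities yields the lower bound, matching $I_r(c,\eps)\nu$ in case~1 and $\psi_r(\eps)\nu^{1/r}\log n$ in case~2.

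The upper bound is the main obstacle, because a naive union split $\prob{Y \ge (1+\eps)\nu} \le \prob{Y' \ge (1+\eps-\delta)\nu} + \prob{Y'' \ge \delta\nu}$ only produces the \emph{minimum} of the exponents $\phi(\eps-\delta)\nu$ and $\psi_r(\delta)\nu^{1/r}\log n$, whereas the theorem requires their \emph{sum}. I would close this gap by enumerating discretised configurations $(J_0, \mathbf{z}_0)$, where $J_0 \subseteq [n]$ is a candidate for $\{i : X_i > R\}$ and each $z_j$ is a multiple of $\eta^2 M$ in $[\eta M, M]$, as in the proof of Lemma~\ref{lem:big_value_sum_LDP}. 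On the event $\{X_j \in [z_j, z_j + \eta^2 M) \text{ for } j \in J_0,\ X_i \le R \text{ for } i \notin J_0\}$, the inequality $Y \ge (1+\eps)\nu$ forces $\sum_{i \notin J_0}\binom{X_i}{r}\ind{X_i \le R} \ge (1+\eps-\delta)\nu$ with $\delta := \sum_j \binom{z_j + \eta^2 M}{r}/\nu$, and by independence of the coordinates in $J_0$ and its complement,
\begin{equation*}
  \prob{\text{this config},\, Y \ge (1+\eps)\nu} \le \prod_{j \in J_0}\prob{X_j \ge z_j} \cdot \prob{\textstyle \sum_{i \notin J_0}\binom{X_i}{r}\ind{X_i \le R} \ge (1+\eps-\delta)\nu}.
\end{equation*}
Weak Chernoff~\eqref{eq:weak_Chernoff} bounds the first factor by $\exp(-(1-o(1))\sum_j z_j \log(M/(Np)))$, and Lemma~\ref{lem:small_value_sum_LDP} (applied to the $n-|J_0| \sim n$ truncated binomials, whose expectation remains $\sim \nu$) bounds the second by $\exp(-(1-o(1))\phi([\eps-\delta]_+)\nu)$. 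Proposition~\ref{prop:convex_sum} with $f(x) = \binom{x}{r}$ gives $\sum_j z_j \ge (1-o(1))r\psi_r(\delta)\nu^{1/r}$, so each configuration contributes at most $\exp(-(1-o(1))[\phi(\eps-\delta)\nu + \psi_r(\delta)\nu^{1/r}\log n])$. Since the number of configurations has logarithm at most $O(|J_0|\log n) = o(\psi_r(\delta)\nu^{1/r}\log n)$ by~\eqref{eq:eta_M}, the sum is dominated by its largest term; optimising over $\delta \in [0,\eps]$ and invoking Proposition~\ref{prop:minimizers} identifies the minimum with $I_r(c,\eps)\nu$ in case~1 and with $(1+o(1))\psi_r(\eps)\nu^{1/r}\log n$ in case~2, completing the upper bound.
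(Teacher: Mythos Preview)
Your lower bound is essentially the paper's: isolate one coordinate for the ``large'' contribution and use independence together with \eqref{eq:binom_lower} and the binomial LDP used at the end of Lemma~\ref{lem:small_value_sum_LDP}.

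For the upper bound you take a genuinely different route. The paper does \emph{not} enumerate large-value configurations. Instead it proves a short negative-association lemma (Lemma~\ref{lem:NA}) showing $\prob{Y' \ge t,\, Y'' \ge u} \le \prob{Y' \ge t}\prob{Y'' \ge u}$, and then splits only over a \emph{fixed} grid $\eps_i = i\eps/\ell$, $i = 0,\dots,\ell-1$: on $\{Y'' \in [\eps_i\nu,\eps_{i+1}\nu)\}$ one has $Y' \ge (1+\eps-\eps_{i+1})\nu$, NA factorises, and Lemmas~\ref{lem:small_value_sum_LDP} and~\ref{lem:big_value_sum_LDP} bound the two factors for each of the finitely many $i$; finally $\ell \to \infty$ via continuity (Lemma~\ref{lem:cont_and_minimiz}). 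Your configuration enumeration replaces NA by literal independence of $(X_i)_{i\in J_0}$ and $(X_i)_{i\notin J_0}$, which is also valid and arguably more elementary, at the price of heavier bookkeeping (summing over $|J_0|$ and absorbing the entropy $O(|J_0|\log n)$ into the Chernoff exponent via $z_j \ge R \to \infty$). Two small points you should tidy: (i) your configurations with $z_j \le M$ do not cover outcomes where some $X_j > M$; a separate crude bound $\prob{\max_j X_j > M} \le n\exp\bigl(-(1+o(1))M\log(M/Np)\bigr)$ via \eqref{eq:weak_Chernoff} handles this; (ii) you invoke Lemma~\ref{lem:small_value_sum_LDP} for $\eps-\delta$ varying with the configuration, so you need its upper bound uniformly over $\eps-\delta$ in a compact set---this does follow from inspecting \eqref{eq:momgen_est}, but should be said. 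The paper's NA shortcut sidesteps both issues by keeping the number of $\delta$-values bounded.
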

\begin{proposition}
  \label{prop:minimizers}
  Fix $r \ge 2$ and let $\cM(c,\eps) = \left\{ \delta \in [0, \eps] : \phi(\eps - \delta) + \psi_r(\delta)c^{1/r-1} = I_r(c,\eps) \right\}$. For every constant $\eps > 0$ there is a constant $c_{r,\eps} > 0$ and a continuous strictly increasing function $\delta^*_{r,\eps} : [c_{r,\eps}, \infty) \to (0, \eps)$ satisfying $\lim_{c \to \infty} \delta^*_{r,\eps}(c) = \eps$ such that 
  \begin{equation*}
    \cM(c, \eps) = 
    \begin{cases}
      \{0\} \quad &\text{if } 0 \le c < c_r\\
      \{0, \delta^*_{r,\eps}(c_{r,\eps})\} \quad &\text{if } c = c_{r,\eps}\\
      \{\delta^*_{r,\eps}(c)\} \quad &\text{if } c > c_{r,\eps} \ .
    \end{cases}
  \end{equation*}
  Moreover, function $\eps \mapsto c_{r,\eps}$ is a decreasing bijection from $(0,\infty)$ to $(0, \infty)$.
\end{proposition}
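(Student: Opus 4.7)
The plan is to analyze $F_c(\delta) := \phi(\eps-\delta) + c^{1/r-1}\psi_r(\delta)$ on $[0,\eps]$ as $\delta$ varies and to track its minimizers as $c$ (and later $\eps$) vary.  Routine computation gives $\phi'(x) = \log(1+x)$, $\psi_r'(\delta) = r^{-2}(r!)^{1/r}\delta^{1/r-1}$ and the identity $\psi_r(\delta)/\psi_r'(\delta) = r\delta$.  Since $\psi_r'(0^+) = +\infty$, we have $F_c'(0^+) = +\infty$, and also $F_c'(\eps) = c^{1/r-1}\psi_r'(\eps) > 0$, so any minimizer of $F_c$ is either $\delta = 0$ or an interior critical point.

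To locate the interior critical points, observe that $F_c'(\delta) = 0$ is equivalent to $h(\delta) = c^{(1-r)/r}$, where $h(\delta) := \log(1+\eps-\delta)/\psi_r'(\delta) = r^2(r!)^{-1/r}\delta^{(r-1)/r}\log(1+\eps-\delta)$.  The plan is to show that $h(0) = h(\eps) = 0$, $h > 0$ on $(0,\eps)$, and $h$ has a unique interior maximum $\delta^\dagger$: setting $(\log h)'(\delta) = 0$ reduces to $(r-1)(1+\eps-\delta)\log(1+\eps-\delta) = r\delta$, whose left-hand side has derivative $-(r-1)\log(1+\eps-\delta) - (2r-1) < 0$, yielding a unique solution.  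Writing $\bar c := h(\delta^\dagger)^{-r/(r-1)}$, for $c \le \bar c$ the equation $h = c^{(1-r)/r}$ has at most one solution $\delta^\dagger$ (attained at $c = \bar c$), $F_c' \ge 0$, and the unique minimizer is $\delta = 0$ with value $\phi(\eps)$.  For $c > \bar c$ there are exactly two roots $\delta_1(c) < \delta^\dagger < \delta_2(c)$: the first is a local max and the second a local min of $F_c$, and since $h$ is strictly decreasing on $(\delta^\dagger,\eps)$, the implicit function theorem gives that $\delta^*(c) := \delta_2(c)$ is continuous and strictly increasing on $(\bar c,\infty)$ with $\delta^*(c) \to \delta^\dagger$ as $c \to \bar c^+$ and $\delta^*(c) \to \eps$ as $c \to \infty$.

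Next, compare $F_c(0) = \phi(\eps)$ with $F_c(\delta^*(c))$.  The envelope theorem yields $\frac{d}{dc}F_c(\delta^*(c)) = (1/r-1)c^{1/r-2}\psi_r(\delta^*(c)) < 0$, so this quantity is strictly decreasing in $c$.  At $c = \bar c$, $F_{\bar c}$ is strictly increasing on $[0,\eps]$ (its derivative is nonnegative and vanishes only at $\delta^\dagger$), hence $F_{\bar c}(\delta^\dagger) > \phi(\eps)$; as $c \to \infty$, both $\phi(\eps-\delta^*(c)) \to 0$ and $c^{1/r-1}\psi_r(\delta^*(c)) \to 0$, so $F_c(\delta^*(c)) \to 0 < \phi(\eps)$.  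The intermediate value theorem then produces a unique $c_{r,\eps} \in (\bar c,\infty)$ with $F_{c_{r,\eps}}(\delta^*(c_{r,\eps})) = \phi(\eps)$, and the description of $\cM(c,\eps)$ follows by direct comparison of the two candidate values, with $\delta^*_{r,\eps} := \delta^*|_{[c_{r,\eps},\infty)}$.

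For the map $\eps \mapsto c_{r,\eps}$, differentiate the defining identity $F_{c_{r,\eps}}(\delta^*(c_{r,\eps};\eps);\eps) = \phi(\eps)$ with respect to $\eps$.  The envelope principle (the $\delta$-derivative of $F_c$ vanishes at $\delta^*$) combined with $\partial F_c/\partial \eps = \log(1+\eps-\delta^*)$ gives
\[
    \log(1+\eps-\delta^*) + (1/r-1)c_{r,\eps}^{1/r-2}\psi_r(\delta^*)\cdot\frac{dc_{r,\eps}}{d\eps} = \log(1+\eps),
\]
so $\frac{dc_{r,\eps}}{d\eps} < 0$ because the numerator $\log(1+\eps) - \log(1+\eps-\delta^*) > 0$ (as $\delta^* > 0$) while the coefficient $(1/r-1)c_{r,\eps}^{1/r-2}\psi_r(\delta^*)$ is negative.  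The remaining, most delicate task is to verify the limits.  For this I would substitute the critical-point relation and $\psi_r(\delta^*)/\psi_r'(\delta^*) = r\delta^*$ into $F_c(\delta^*) = \phi(\eps)$ to obtain the implicit equation $\phi(\eps) - \phi(\eps-\delta^*) = r\delta^*\log(1+\eps-\delta^*)$ characterizing $\delta^*_{r,\eps}(c_{r,\eps})$, then perform elementary asymptotic analysis: for $\eps \to 0$, $\phi(\eps) \sim \eps^2/2$ forces $\delta^*/\eps \to (2r-2)/(2r-1)$ and hence $c_{r,\eps}^{(1-r)/r} \to 0$, i.e.\ $c_{r,\eps} \to \infty$; for $\eps \to \infty$, $\phi(\eps) \sim \eps\log\eps$ forces $\eps - \delta^* \sim \eps^{1/r}$ and hence $c_{r,\eps}^{(1-r)/r} \to \infty$, i.e.\ $c_{r,\eps} \to 0$.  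Together with the monotonicity and continuity already established, this shows $\eps \mapsto c_{r,\eps}$ is a decreasing bijection from $(0,\infty)$ onto $(0,\infty)$.
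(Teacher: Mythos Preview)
Your argument is correct and follows the same strategy as the paper's proof: analyze the critical-point structure of $F_c$ on $(0,\eps)$, identify the larger critical point $\delta^*(c)$ as the interior local minimum, compare $F_c(\delta^*(c))$ with $\phi(\eps)$ via the envelope theorem to locate $c_{r,\eps}$, and then implicitly differentiate the defining identity to obtain monotonicity in $\eps$. The paper reparametrizes by $\alpha = r^{-1}(r!)^{1/r}c^{1/r-1}$ and deduces the two-critical-point structure from strict convexity of $f_\alpha'$ rather than from single-peakedness of your $h$, but these are equivalent observations. The one place the arguments genuinely diverge is the bijectivity of $\eps \mapsto c_{r,\eps}$: the paper handles $\eps\to 0$ by the quick bound $\alpha_1(\eps) < \alpha_0(\eps) \to 0$ and handles $\eps\to\infty$ by contradiction via divergence of $\int \alpha_1'(\eps)\,d\eps$, whereas you reduce to the scalar equation $\phi(\eps)-\phi(\eps-\delta^*) = r\delta^*\log(1+\eps-\delta^*)$ and extract explicit asymptotics for $\delta^*$ at both ends---more computational, but more informative and arguably cleaner than the paper's contradiction argument.
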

\begin{lemma}
  \label{lem:cont_and_minimiz}
  For fixed $c \ge 0$, the function $\eps \mapsto I_r(c,\eps)$ is continuous and $\lim_{\eps \to \infty} I_r(c, \eps) = \infty$. For fixed $\eps > 0$, the function $c \mapsto I_r(c, \eps)$ is continuous on $[0, \infty)$ and $I_r(c, \eps) \sim \psi_r(\eps) c^{1/r-1}$ as $c \to \infty$.
\end{lemma}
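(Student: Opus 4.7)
\medskip
\noindent\textbf{Proof plan.} The main obstacle is that the coefficient $c^{1/r-1}$ blows up as $c\to 0^+$, so continuity of $c \mapsto I_r(c,\eps)$ at the boundary $c=0$ is not immediate; everywhere else, each assertion reduces to standard facts about minima of jointly continuous functions on compact sets.

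\medskip
For continuity of $\eps \mapsto I_r(c,\eps)$ with $c \ge 0$ fixed, I would reparametrize by $\delta = s\eps$ with $s \in [0,1]$, obtaining
\[
I_r(c,\eps) = \min_{s \in [0,1]} \bigl( \phi(\eps(1-s)) + \psi_r(s\eps) c^{1/r-1} \bigr) \quad (c>0),
\]
and similarly $I_r(0,\eps) = \phi(\eps)$. The integrand is jointly continuous in $(s,\eps)$ on the compact strip $[0,1] \times [\eps_1,\eps_2]$, so the minimum is continuous in $\eps$ by the elementary version of Berge's maximum theorem. For $\lim_{\eps \to \infty} I_r(c,\eps) = \infty$ I would fix an optimal $\delta^* \in [0,\eps]$ and split on whether $\delta^* \le \eps/2$ or $\delta^* > \eps/2$: in the first case $\phi(\eps - \delta^*) \ge \phi(\eps/2) \to \infty$, and in the second $\psi_r(\delta^*) c^{1/r-1} \ge r^{-1}(r!\eps/2)^{1/r} c^{1/r-1} \to \infty$ (for $c > 0$; the case $c=0$ is immediate).

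\medskip
For continuity of $c \mapsto I_r(c,\eps)$ on $(0,\infty)$ with $\eps > 0$ fixed, the same compactness argument applies since $(\delta,c) \mapsto \phi(\eps - \delta) + \psi_r(\delta) c^{1/r-1}$ is jointly continuous on $[0,\eps] \times (0,\infty)$. The only delicate point is continuity at $c = 0$. The upper bound $I_r(c,\eps) \le \phi(\eps)$ is obtained by taking $\delta = 0$. For the matching lower bound, I would let $\delta^*_c$ be any minimizer and observe that if $\delta^*_c \ge a > 0$ along some subsequence $c \to 0^+$, then $\psi_r(\delta^*_c) c^{1/r-1} \ge \psi_r(a) c^{1/r-1} \to \infty$, contradicting the upper bound $\phi(\eps)$. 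Hence $\delta^*_c \to 0$, so $\phi(\eps - \delta^*_c) \to \phi(\eps)$ and $I_r(c,\eps) \ge \phi(\eps - \delta^*_c) \to \phi(\eps) = I_r(0,\eps)$.

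\medskip
For the asymptotic $I_r(c,\eps) \sim \psi_r(\eps) c^{1/r-1}$ as $c \to \infty$, the upper bound $I_r(c,\eps) \le \psi_r(\eps) c^{1/r-1}$ follows by plugging in $\delta = \eps$. For the matching lower bound, again let $\delta^*_c$ be a minimizer. If $\delta^*_c \le \eps - a$ along some subsequence $c \to \infty$ with $a > 0$, then $I_r(c,\eps) \ge \phi(\eps - \delta^*_c) \ge \phi(a) > 0$, contradicting the upper bound (which tends to $0$). Therefore $\delta^*_c \to \eps$, whence $\psi_r(\delta^*_c) \to \psi_r(\eps)$ by continuity and
\[
I_r(c,\eps) \ge \psi_r(\delta^*_c) c^{1/r-1} \sim \psi_r(\eps) c^{1/r-1},
\]
completing the proof.
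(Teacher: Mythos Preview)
Your proof is correct. For the first two assertions (continuity in $\eps$ and $\lim_{\eps\to\infty}I_r(c,\eps)=\infty$) you follow essentially the same route as the paper: the same reparametrization $\delta=s\eps$ over $s\in[0,1]$, and the same splitting at $\eps/2$ for the limit.

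The one noteworthy difference is in the last two assertions. The paper treats continuity of $c\mapsto I_r(c,\eps)$ and the asymptotic $I_r(c,\eps)\sim\psi_r(\eps)c^{1/r-1}$ by invoking Proposition~\ref{prop:minimizers}, which gives an explicit description of the minimizer $\delta^*_{r,\eps}(c)$ and its continuity and limiting behaviour. You instead argue directly: compactness for $c>0$, and for the boundary cases $c\to 0^+$ and $c\to\infty$ you use the trivial upper bounds ($\delta=0$, $\delta=\eps$ respectively) to force $\delta^*_c\to 0$ or $\delta^*_c\to\eps$ by contradiction, then read off the lower bound. This is more elementary and makes the lemma self-contained, at the cost of not exposing the finer structure of the minimizers; the paper's route gets that structure for free since Proposition~\ref{prop:minimizers} is needed elsewhere anyway.
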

\begin{proof}
  For $c = 0$ the first two claims are clear from the definition of $I_r(0,\eps) = \phi(\eps)$, hence further fix $c > 0$ and let $g_\eps(x) = \phi((1-x)\eps) + \psi_r(x \eps)  c^{1/r - 1}$ so that $I_r(c,\eps) = \min_{x \in [0, 1]} g_\eps(x)$. We claim that if $\eps_n \to \eps$, then $g_{\eps_n}$ converges uniformly to $g_\eps$ on $[0,1]$ and therefore $I_r(c,\eps_n) \to I_r(c,\eps)$, implying continuity in $\eps$. Using that $\phi'$ is continuous on $[0, \infty)$, we get that for every $y \in [0,1]$
  \begin{equation*}
    |\phi(\eps y) - \phi(\eps_n y)| \le |\eps y- \eps_n y| \max_{x \in [0,1]} |\phi'(x)| \le |\eps - \eps_n | \max_{x \in [0,1]} |\phi'(x)| \to 0 \ .
  \end{equation*}
  Moreover, for every $x \in [0,1]$,
\begin{equation*}
  |\left( x \eps \right)^{1/r} - \left( x \eps_n \right)^{1/r} | \le  x^{1/r} |\eps^{1/r} - \eps_n^{1/r}| \le  |\eps^{1/r} - \eps_n^{1/r}| \to 0\ .
\end{equation*}
  To see that $\lim_{\eps \to \infty} I_r(c,\eps) = \infty$, note that by monotonicity of functions $\phi$ and $\psi_r$
  \[
    g_{\eps}(\delta) \ge \phi(\eps/2) \ind{\{\delta \le \eps/2\}} + \psi_r(\eps/2)c^{1/r - 1} \ind{\{\delta > \eps\}} \ge \min \{\phi(\eps/2), \psi_r(\eps/2)c^{1/r - 1} \}, 
  \]
  which tends to infinity as $\eps$ grows.

  The properties of the function $c \mapsto I_r(c,\eps)$ follow from Proposition~\ref{prop:minimizers}: continuity is clear from continuity of $\phi, \psi_r$ and $\delta^*(c)$ in view of the description of $\cM(c,\eps)$; the asymptotics with respect to $c \to \infty$ hold because $I_r(c,\eps) \le \psi_r(\eps)c^{1/r-1}$ and, on the other hand, $I_r(c, \eps) \ge \psi_r(\delta^*(c))c^{1/r-1}$ and $\delta^*(c) \to \eps$.
\end{proof}
\begin{lemma}
  \label{lem:NA}
  For any real numbers $t, u$ we have \(\prob{Y' \ge t, Y'' \ge u}\le \prob{Y' \ge t}\prob{Y'' \ge u} \ .\)
\end{lemma}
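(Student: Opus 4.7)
The plan is to condition on the random vector of indicators $\mathbf A := (\ind{X_i \le R})_{i \in [n]}$, which records which of the $X_i$ fall in the range contributing to $Y'$. Set $K := \sum_i \ind{X_i \le R}$. Since the $X_i$ are independent, the two sub-collections $(X_i)_{i : \ind{X_i \le R} = 1}$ and $(X_i)_{i : \ind{X_i \le R} = 0}$ are conditionally independent given $\mathbf A$, and $Y'$ is a function of the former while $Y''$ is a function of the latter. Hence
\[
  \prob{Y' \ge t,\ Y'' \ge u \,|\, \mathbf A} = \prob{Y' \ge t \,|\, \mathbf A}\cdot\prob{Y'' \ge u \,|\, \mathbf A}.
\]

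Next I would use exchangeability of the $X_i$'s to reduce each factor on the right to a function of $K$ alone: given $K = k$, the variables in the first sub-collection are i.i.d.\ with the distribution of $\Bin(N,p)$ conditioned on $\{\cdot \le R\}$, so $Y'$ is distributed as a sum of $k$ i.i.d.\ non-negative random variables. Writing $\alpha(k) := \prob{Y' \ge t \,|\, K = k}$ and $\beta(k) := \prob{Y'' \ge u \,|\, K = k}$, adding an independent non-negative summand only stochastically increases the sum, so $\alpha$ is non-decreasing in $k$; by the symmetric argument $\beta$ is non-increasing in $k$ (a larger $K$ corresponds to fewer summands in $Y''$).

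Taking expectation over $\mathbf A$, it remains to establish
\[
  \E[\alpha(K)\beta(K)] \le \E\alpha(K)\cdot\E\beta(K),
\]
which is Chebyshev's sum inequality applied to $\alpha$ (non-decreasing) and $\beta$ (non-increasing): for an independent copy $K'$ of $K$, the product $(\alpha(K) - \alpha(K'))(\beta(K) - \beta(K'))$ is almost-surely non-positive, so averaging yields the covariance inequality. The right side equals $\prob{Y' \ge t}\prob{Y'' \ge u}$, which finishes the argument.

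There is no real obstacle; the only conceptual point is to recognise that $Y'$ and $Y''$ are negatively dependent because each $X_i$ contributes to exactly one of them, and that this negative dependence can be extracted by conditioning on the small-value/large-value partition of $[n]$. An alternative route would be to observe that each pair $(\binom{X_i}{r}\ind{X_i \le R},\ \binom{X_i}{r}\ind{X_i > R})$ has product zero and is therefore negatively associated, and then invoke closure of negative association under independent coupling and summation; the conditioning approach above avoids appealing to that machinery.
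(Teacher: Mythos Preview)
Your proof is correct and takes a genuinely different route from the paper. The paper invokes the negative-association machinery of Joag-Dev and Proschan: it shows that for each $i$ the pair $(X_i \ind{X_i \le R},\, X_i \ind{X_i > R})$ is NA (since one of them is always zero), then uses closure of NA under independent unions (their property P$_7$) and the definition of NA applied to the increasing indicator functions of $\{Y' \ge t\}$ and $\{Y'' \ge u\}$, which depend on disjoint subfamilies. This is exactly the ``alternative route'' you sketched at the end of your write-up. Your main argument---condition on the partition vector $\mathbf A$, reduce to functions of the single count $K$ by exchangeability, and finish with Chebyshev's correlation inequality---is more self-contained and requires no outside reference. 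Conceptually the two arguments are close (your Chebyshev step is really a one-dimensional instance of negative association), but yours makes the mechanism completely explicit, while the paper's has the advantage of being a one-line citation once the reader accepts the NA formalism.
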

\begin{proof}
  We use the notion of \emph{negatively associated} (\emph{NA}) random variables from \cite{JoagDevProschan}. For $i \in [n]$ let $X_i' = X_i \ind{X_i \le R}$ and $X_i'' = X_i \ind{X_i > R}$. 
  According to property P$_1$ in \cite{JoagDevProschan}, $X_i'$ and $X_i''$ are NA if
 \[
   \prob{X_i' \le x, X_i'' \le y} \le \prob{X_i' \le x}\prob{ X_i'' \le y}, \qquad\text{for all } x,y \in \R
 \]
 or, equivalently,
 \[
   \prob{X_i' > x, X_i'' > y} \le \prob{X_i' > x}\prob{ X_i'' > y}, \qquad\text{for all } x,y \in \R
 \]
 which is a trivial identity if $x < 0$ or $y < 0$ while the left-hand side is zero if both $x, y \ge 0$.
 By property P$_7$ in \cite{JoagDevProschan}, $X_1', X_1'', \dots, X_n', X_n''$ are NA, because they are a union of $n$ sets of independent NA variables. Note that $Y' = \sum_{i = 1}^n \binom{X_i'}{r}$ and $Y'' = \sum_{i = 1}^n \binom{X_i''}{r}$. Since for every number $t$ function $f_t(\xx) = \ind{\{\sum_{i = 1}^n \binom{x_i}{r} \ge t\}}$ is increasing, the definition of NA gives
  \begin{equation*}
   \prob{Y' \ge t, Y'' \ge u} - \prob{Y' \ge t}\prob{Y'' \ge u} = \Cov({f_t(X_1', \dots, X_n'), f_u(X_1'', \dots, X_n'')} ) \le 0 \ .
  \end{equation*}
\end{proof}

\begin{proof}[Proof of Theorem~\ref{thm:iid_UT}]
  Recall $M = \min \{(r!\eps \nu)^{1/r}, N\}$ from Lemma~\ref{lem:big_value_sum_LDP}. The conditions on~$p$ give 
  \begin{equation*}
    \log \frac{1}{Np} \asymp \log n, \quad 1 \ll M \sim (r!\eps \nu)^{1/r} = n^{o(1)} \quad \text{and} \quad \log \frac{M}{Np} \sim \frac{1}{r}\log n \ .
  \end{equation*}
  Choose $\eta = \eta(n) \to 0$ such that
 \[
   \frac{1}{M} \ll \eta \ll \frac{(\log n)^{1/(r-1)}}{M}
 \]
 and set $R := \eta M$. It is easy to check that such $\eta$ and $R$ satisfy the conditions of Lemmas~\ref{lem:small_value_sum_LDP} and~\ref{lem:big_value_sum_LDP}.

 \textit{Upper bound.} Recall the function $\psi_r(\delta) := r^{-1}(r!\delta)^{1/r}$. The upper bound on $p$ implies $\nu \ll N^r$; hence by Lemma~\ref{lem:big_value_sum_LDP}, for every constant $\delta \ge 0$ (note that for $\delta = 0$ it is trivial), we have
  \begin{equation}
    \label{eq:psi_hat_simpler}
    \log \prob{Y'' \ge \delta\nu} \le -(1 + o(1)) \psi_r(\delta) \nu^{1/r} \log n \ .
  \end{equation}
  Fix an integer $\ell$ such that $1/\ell < \eps$ and let $\eps_i := \frac{i}{\ell}\eps$ for $i = 0, \dots, \ell$. It follows that
  \begin{align*}
    \prob{Y \ge (1 + \eps)\nu} &= \sum_{i = 0}^{\ell - 2} \prob{Y'' \in \left[\eps_i\nu, \eps_{i+1}\nu\right), Y \ge (1 + \eps)\nu} + \prob{Y'' \ge \eps_{\ell - 1} \nu , Y \ge (1 + \eps)\nu} \\
    &\le \sum_{i=0}^{\ell - 2} \prob{Y'' \ge \eps_i \nu, Y' \ge (1 + \eps - \eps_{i+1}) \nu }  + \prob{Y'' \ge \eps_{\ell - 1} \nu}\\
    \justify{Lemma~\ref{lem:NA}, $\phi(0) = 0$}  &\le  \sum_{i=0}^{\ell - 2} \prob{Y' \ge (1 + \eps - \eps_{i+1})\nu} \prob{Y'' \ge \eps_i\nu} + \e^{-\phi(0)\nu} \cdot \prob{Y'' \ge \eps_{\ell - 1} \nu} \\ 
    \justify{Lemma~\ref{lem:small_value_sum_LDP} and \eqref{eq:psi_hat_simpler}} &\le \sum_{i=0}^{\ell - 1}  \exp \left( -(1 + o(1))\left[ \phi\left( \eps - \eps_{i+1} \right)\nu + \psi_r(\eps_i)\nu^{1/r} \log n \right] \right) \\
    \justify{$\ell$ is fixed, $\nu \to \infty$} &\le \exp \left( - (1 + o(1)) \min_{i = 0}^{\ell - 1} \left( \phi((\eps - 1/\ell - \eps_i))\nu    + \psi_r(\eps_i)\nu^{1/r}\log n \right)  \right) \ .
  \end{align*}
  Let $c_n := \nu / \log^{r/(r-1)} n$. Noting that $\nu^{1/r}\log n = \nu \cdot c_n^{1/r - 1}$, we obtain that the minimum in the last expression is at least $I\left( c_n, \eps - 1/\ell \right) \nu$. By Lemma~\ref{lem:cont_and_minimiz} this is asymptotically $I_r(c, \eps - 1/\ell) \nu$, when $c_n \to c \in [0, \infty)$ and asymptotically $\psi_r(\eps - 1/\ell) c_n^{1/r - 1} \nu = \psi_r(\eps - 1/\ell) \nu^{1/r} \log n$ when $c_n \to \infty$.  
  By Lemma~\ref{lem:cont_and_minimiz} function $\eps \mapsto I_r(c,\eps)$ is continuous and so is $\psi_r(\eps)$. Since $\ell$ can be chosen arbitrarily large, this completes the proof of the upper bound.

  \textit{Lower bound.} We will consider separately the vector $X_1, \dots, X_{n-1}$ and variable $X_n$ and then use their independence. 

  Fix an arbitrary constant $\delta \in [0, \eps)$. By applying Lemma~\ref{lem:small_value_sum_LDP} with $n-1$ instead of $n$ and $N = n-1$ and using the fact that $\nu \sim (n-1)\binom{n-1}{r}p^r$ and continuity of function $\phi$, we obtain
  \begin{equation*}
    \log \prob{\sum_{i = 1}^{n-1} \binom{X_i}{r}\ind{\{X_i \le R\}} \ge (1 + \eps - \delta)\nu} \ge  -(1 + o(1)) \phi(\eps - \delta) \nu\ .
  \end{equation*}
  Further, the condition on $p$ ensures that $\max\{1, np\} \ll \nu \ll n$ and therefore inequality \eqref{eq:binom_lower} implies 
  \begin{align*}
    \log \prob{\binom{X_n}{r} \ge \delta \nu} &\ge \log \prob{X_n \ge (r! \delta \nu)^{1/r}} \ge  -(1 + o(1))(r! \delta \nu)^{1/r} \log \frac{(r! \delta \nu)^{1/r}}{np} \\
  &\sim -(r! \delta \nu)^{1/r} \log n^{1/r} = -\psi_r(\delta) \nu^{1/r-1} (\log n) \cdot \nu \ .
  \end{align*}
  Independence and last two inequalities imply
  \begin{align}
    \nonumber \log \prob{Y \ge (1 + \eps)\nu} \ge &\log \prob{\sum_{i = 1}^{n-1} \binom{X_i}{r}\ind{\{X_i \le R\}} \ge (1 + \eps - \delta)\nu, \binom{X_n}{r} \ge \delta \nu} \\
    \label{eq:combined} &\ge - (1 + o(1)) \left[ \phi(\eps - \delta)  + \psi_r(\delta)\nu^{1/r - 1} \log n \right]  \nu
  \end{align}
  We complete the proof of the lower bound by case distinction. If $\nu/\log^{r/(r-1)} n \to 0$, we set $\delta = 0$, obtaining exponent $\phi(\eps) \nu = I_r(0,\eps)\nu$. If $\nu/\log^{r/(r-1)} n \to c \in (0, \infty)$, we set $\delta \in \cM(c, \eps)$ to be a minimizer (note that $\delta < \eps$ by Proposition~\ref{prop:minimizers}), which gives that \eqref{eq:combined} is asymptotically 
\[
  \left( \phi(\eps - \delta) + \psi_r(\delta) c^{1/r-1} \right) \nu = I_r(c,\eps) \nu \ .
\]
Finally, if $\nu/\log^{r/(r-1)} n \to \infty$, then \eqref{eq:combined} is asymptotically $\psi_r(\delta) \nu^{1/r} \log n$ and since we can pick $\delta$ arbitrarily close to $\eps$ the desired lower bound holds by continuity of $\psi_r$.
\end{proof}
\begin{proof}[Proof of Proposition~\ref{prop:minimizers}]
  Using change of variable 
  \begin{equation}
    \label{eq:alpha_c}
    \alpha = r^{-1}(r!)^{1/r}c^{1/r - 1}, \qquad c > 0 \ ,
  \end{equation}
    we define functions $f(\alpha, \delta, \eps) := \phi(\eps - \delta) + \alpha \delta^{1/r}$ so that
  \begin{equation}
    \label{eq:def_g}
    \cM(c,\eps) = \left\{ \delta \in [0, \eps] : f(\alpha, \delta, \eps) = \min_{\delta \in [0,\eps]} f(\alpha,\delta,\eps)\right\}, \quad \eps > 0.
  \end{equation}
  Fix $\eps > 0$ and let $f_\alpha(\delta) = f(\alpha, \delta, \eps)$.  Using that $\phi'(x) = \log (1 + x)$, we obtain that
\begin{equation}\label{eq:derivative}
  f_\alpha'(\delta) = 
  \frac{\alpha}{r \delta^{1 - 1/r}} - \log(1 + \epsilon - \delta), \qquad \delta \in (0, \eps].
\end{equation}
Since $g_\alpha(\delta) := \frac{\alpha}{r\delta^{1-1/r}}$ is strictly convex and $h(\delta) := \log\left( 1 + \eps - \delta \right)$ is strictly concave, function $f_\alpha' = g_\alpha - h$ is strictly convex. Let $\alpha_0(\eps) > 0$ be the value of $\alpha$ for which the graphs of $g_\alpha$ and $h$ touch, that is
\(
  \alpha_0(\eps) := \min \left\{ \alpha \ge 0 :  g_\alpha \ge h\right\} = \max \left\{ r\delta^{1 - 1/r}\log (1 + \eps - \delta) : \delta \in [0,\eps] \right\}.
  \)
For later purposes we observe that the latter expression it easily follows that
\begin{equation}
  \label{eq:alpha_zero_zero}
  \lim_{\eps \to 0} \alpha_0(\eps) = 0 \ .
\end{equation}
A brief look at the graphs (see Figure~\ref{fig:graphs}) makes plain that for $\alpha \in (0, \alpha_0(\eps)]$ function $f_\alpha$ has two stationary points $0 < \delta_-(\alpha, \eps) \le \delta_+(\alpha, \eps) < \eps$ which coincide if and only if $\alpha = \alpha_0(\eps)$, are continuous both in $\alpha$ as well as $\eps$ and
\begin{equation}
  \label{eq:delta_plus}
  \alpha \mapsto \delta_+(\alpha, \eps) \text{ is decreasing on } (0, \alpha_0(\eps)] \text{ with } \lim_{\alpha \to 0} \delta_+(\alpha, \eps) = \eps\ .
\end{equation}
On the other hand, for $\alpha > \alpha_0(\eps)$ function $f_\alpha$ has no stationary points.

\begin{figure}[t]
    \centering
    \begin{subfigure}[b]{0.49\textwidth}
        \centering
        \includegraphics[width=6cm]{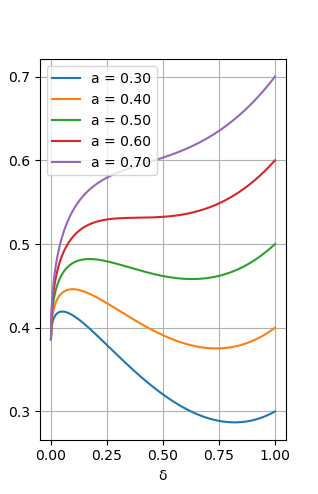} 
        \caption{}
        \label{fig:left}
    \end{subfigure}
    \hfill
    \begin{subfigure}[b]{0.49\textwidth}
        \centering
        \includegraphics[width=6cm]{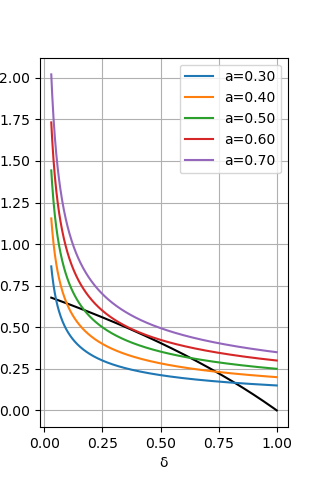} 
        \caption{}
        \label{fig:right}
    \end{subfigure}
    \caption{Case $r = 2, \eps = 1$. Left: graphs of $f_a$ for a few values of $a$; right: graphs of $\log(1 + \eps - \delta)$ as well as $(a/r)\delta^{-1/r}$.}
    \label{fig:graphs}
\end{figure}
Note that $\lim_{\delta \to 0} f_\alpha'(\delta) = +\infty$ and $f_\alpha'(\eps)  > 0$. Therefore for $\alpha \ge \alpha_0(\eps)$ function $f_\alpha$ is increasing and therefore attains its minimum $f_\alpha(0) = \phi(0)$ only at zero, while for $\alpha \in (0, \alpha_0(\eps))$
\[
  \min f_\alpha  = \min \left\{ \phi(0), F(\alpha, \eps) \right\} \ , \quad \text{with} \quad  F(\alpha, \eps) := f_\alpha(\delta_+(\alpha, \eps))= f(\alpha, \delta_+(\alpha, \eps), \eps).
\]
Since $\pd{f}{\delta}(\alpha, \delta_+(\alpha, \eps), \eps) = f_{\alpha}'(\delta_+(\alpha,\eps)) = 0$, using the chain rule, for $\alpha, \eps > 0$
\begin{equation}
  \label{eq:part_G_eps}
  \pd{F}{\eps}(\alpha, \eps) = \pd{f}{\eps}(\alpha, \delta_+(\alpha, \eps), \eps)  = \phi'(\eps-\delta_+(\alpha, \eps)) = \log (1+\eps - \delta_+(\alpha, \eps)) > 0 \ 
\end{equation}
and
\begin{equation}
  \label{eq:part_G_alpha}
  \pd{F}{\alpha}(\alpha, \eps) = \pd{f}{\alpha}(\alpha, \delta_+(\alpha, \eps), \eps)  = \delta_+(\alpha, \eps)^{1/r} > 0 \ .
\end{equation}
By \eqref{eq:part_G_alpha}, $\alpha \mapsto F(\alpha, \eps)$ is strictly increasing. Recalling that $f_{\alpha_0(\eps)}$ is increasing, we get that $F(\alpha_0(\eps), \eps) = f_{\alpha_0(\eps)}(\delta_+(\alpha, \eps)) > \phi(\eps)$. On the other hand $\lim_{\alpha \to 0} F(\alpha, \eps) = 0$, because $F(\alpha, \eps) \le f(\alpha, \eps, \eps) = \alpha \eps^{1/r}$. We conclude that there exists a unique $\alpha_1(\eps) \in (0, \alpha_0(\eps))$ such that 
\begin{equation}
\label{eq:alpha_one}
F(\alpha_1(\eps), \eps) = \phi(\eps) \ .
\end{equation}
Recalling \eqref{eq:def_g}, we have shown
\begin{equation}
  \label{eq:minimizers_g}
  \cM(c, \eps) =  
    \begin{cases} 
    \{ \delta_+(\alpha, \eps) \} \quad &\text{if } 0 < \alpha < \alpha_1(\eps)\\
    \{ 0, \delta_+(\alpha, \eps)\} \quad &\text{if } \alpha = \alpha_1(\eps)\\
    \{ 0 \} \quad &\text{if } \alpha > \alpha_1(\eps) \ .
    \end{cases}
\end{equation}
Let us show that $\alpha_1 : (0, \infty) \to (0, \infty)$ is an increasing bijection.

Since $\alpha_1$ is defined by \eqref{eq:alpha_one} and both $\pd{(F-\phi)}{\eps}$ and $\pd{(F-\phi)}{\alpha}$ are continuous functions on $(0,\infty)\times(0,\infty)$, implicit differentiation yields, using \eqref{eq:part_G_eps} and \eqref{eq:part_G_alpha},
\[
  \alpha_1'(\eps) = -\frac{\pd{(F - \phi)}{\eps}(\alpha_1(\eps), \eps)}{\pd{(F - \phi)}{\alpha}(\alpha_1(\eps), \eps)} = \frac{\phi'(\eps) - \pd{F}{\eps}(\alpha_1(\eps),\eps)}{\pd{F}{\alpha}(\alpha_1(\eps),\eps)} = \frac{\log (1 + \eps) - \log (1 + \eps - \delta_+(\alpha_1(\eps), \eps))}{\delta_+(\alpha_1(\eps),\eps)^{1/r}} > 0 \ ,
\]
that is, $\alpha_1$ is an increasing continuous function.
Recall that $\alpha_1(\eps) < \alpha_0(\eps)$. From \eqref{eq:alpha_zero_zero} we get $\lim_{\eps \to 0} \alpha_1(\eps) = 0$. Next we show that if $\eps \to \infty$, then $\alpha_1(\eps) \to \infty$. Suppose, for contradiction, that $\alpha_1(\eps) \uparrow A < \infty$. It is intuitive from Figure~\ref{fig:right} and not hard to show formally that $\delta_+(\alpha_1(\eps), \eps) \ge \delta_+(A, \eps) \to \infty$. Since $f_{\alpha_1(\eps)}'(\delta_+(\alpha_1(\eps),\eps)) = 0$, from \eqref{eq:derivative} it follows that $\eps - \delta_+(\alpha_1(\eps),\eps) \to 0$ and in particular that $\delta_+(\alpha_1(\eps), \eps) \sim \eps$. Consequently $\alpha_1'(\eps) = \frac{\log (1 + \eps) + o(1)}{(1 + o(1))\eps^{1/r}} \gg \eps^{-1/r}$. Since $\alpha_1$ can be expressed as an integral of $\alpha_1'$ and $\int_1^\infty \eps^{-1/r} d \eps$ diverges, we get a contradiction to the assumption that $\alpha_1$ is bounded.

Recalling \eqref{eq:alpha_c}, we treat $\alpha$ as a function of $c$. Note that $\alpha : (0, \infty) \to (0,\infty)$ is a decreasing continuous bijection and so is its inverse. Given $\eps > 0$, we set $c_{r,\eps} = \alpha^{-1}(\alpha_1(\eps))$ and $\delta^*_{r,\eps}(c) = \delta_+(\alpha(c), \eps)$. Description of $\cM(c,\eps)$ follows from \eqref{eq:minimizers_g}, while the desired properties of functions $\delta^*_{r,\eps}$ and $\eps \mapsto c_{r, \eps}$ follow from the corresponding properties of functions $\alpha \mapsto \delta_+(\alpha, \eps)$ and $\alpha_1$.

\end{proof}

\section{Proof of Theorem~\ref{thm:stars_UT}}

We treat separately two overlapping cases (i) $p \le n^{o(1)-1-1/r}$ (equivalent to $\log \mu \ll \log n$) and (ii) $\mu \ge \log^{7r}n$. 

\textbf{Case (i).} It holds that $\mu = n^{o(1)} \ll n^r$ so we are in one of the first two cases of \eqref{eq:stars_UT}. 

We will derive the upper bound from Theorem~\ref{thm:iid_UT} using the first inequality in \eqref{eq:bad_sequences} and inequality \eqref{eq:reduct_upper}. 
Let $\Phi(\eps)$ stand for the expression on the right-hand side of \eqref{eq:stars_UT} and note that $\Phi(\eps)$ is of order $\Phi_n$, defined in \eqref{eq:Phi_order}.  Let $\cR_n = \cR_{C,n}$ as defined in \eqref{eq:def_cR}, with $C = C(\eps) > 0$ large enough so that \eqref{eq:bad_sequences} implies 
  \begin{equation}
    \label{eq:cR_negl}
    \prdp{\overline{\cR_n}} \le \e^{-\Phi(\eps)} \ .
  \end{equation}
  In the rest of the proof we will use the fact that $\delta_n = \delta_{C,n}$ defined in \eqref{eq:def_deltas} satisfies $\delta_n \to 0$ (see \eqref{eq:delta_small}).
   Defining the upper tail event 
    \[
      \cS_{n}(p,\eps) := \left\{ \dd \in \{0, \dots, n-1\}^n : \tbin{d_1}{r} + \dots + \tbin{d_n}{r} \ge (1 + \eps) n \tbin{n-1}{r}p^r \right\} \ ,
    \]
    we have 
  \begin{align}
    \notag \prob{X \ge (1 + \eps)\mu} &= \prdp{\cS_n(p,\eps)} \le \prdp{\overline{\cR_n}} + \prdp{\cS_n(p,\eps) \cap \cR_n} \\
    \label{eq:Sn_upper} \justify{\eqref{eq:cR_negl},\eqref{eq:reduct_upper}} &\le \e^{ -\Phi(\eps) } + O(n^2p) \max_{p' = (1 \pm \delta_n)p} \prb{p'}{\left(\cS_n(p,\eps)\right)} \ .
  \end{align}
Now for $p' \le (1 + \delta_n)p$ we have 
\[
  (1 + \eps) n \binom{n-1}{r}p^r \ge (1 + \eps') n \binom{n-1}{r}(p')^r, \quad \text{with } \eps' := (1 + \eps)/(1 + \delta_n)^r - 1 \to \eps,
\]
whence $\cS_n(p,\eps) \subset \cS_n(p', \eps')$.
Now preparing to apply Theorem~\ref{thm:iid_UT} (in which $\nu = \mu$ due to $N = n-1$), assume that $\mu/\log^{r/(r-1)} n \to c \in [0, \infty]$. Since $p' \sim p$, we get hat also
\[
  n\binom{n-1}{r}(p')^r / \log^{r/(r-1)} n \to c.
\]
Moreover, the right-hand side of \eqref{eq:whole_UT} is a continuous function of $\eps$ in each of the two cases (when $c \in [0, \infty)$ this follows from Lemma~\ref{lem:cont_and_minimiz}). In view of the last two observations Theorem~\ref{thm:iid_UT} implies that $\log \prb{p'}{\left(\cS_n(p',\eps')\right)} \le (1 + o(1)) \Phi(\eps)$. Combining this with \eqref{eq:Sn_upper} and noting that the factor $O(n^2p)$ is $\e^{O(\log n)} = \e^{o(\Phi(\eps))}$ due to the assumption $\mu \gg \log n$, we have shown the upper bound.

For the lower bound we apply inequality \eqref{eq:reduct_lower} with $\cS_n = \cS_n(p,\eps)$ and Theorem~\ref{thm:iid_UT}, which implies $\prbp{\cS_n} \ge \e^{-(1 + o(1))\Phi(\eps)}$. By choosing $C = C(\eps)$ in the definition of $\cR_n = \cR_{C,n}$ large enough, we can assume, by \eqref{eq:bad_sequences}, that $\prbp{\overline{\cR_n}} \ll \prbp{\cS_n}$. Recalling that $\Phi(\eps) \asymp \Phi_n$, we will complete the proof in the case (i) by showing that
\begin{equation}
  \label{eq:cT_upper}
    \prbp{\cR_n \cap \overline{\cT_n}} \le \e^{-\omega(\Phi_n)} \ .
\end{equation}
To prove \eqref{eq:cT_upper}, we apply Theorem~\ref{thm:iid_UT} with $r = 2$. If $\dd \in \cR_n \cap \overline{\cT_n}$, then
\[
  2n^2p < \sum_{i \in [n]} d_i^2 = 2\sum_{i \in [n]} \binom{d_i}{2} +  \sum_{i \in [n]} d_i \le 2\sum_{i \in [n]} \binom{d_i}{2} + (1 + \delta_n)n^2p  
\]
and therefore, using $pn \to 0$ 
\begin{equation*}
\sum_{i \in [n]} \binom{d_i}{2} \ge \frac{1 - \delta_n}{2}n^2p \gg n^3p^2.
\end{equation*}
That is, the upper tail event holds with arbitrarily large $\eps$.
For $r = 2$, the right-hand side of \eqref{eq:whole_UT} is of order 
\[
  \min\{n(np)^2, n^{1/2} (np) \log n\} \geBy{ \text{$np \to 0$} } \min\{n(np)^r, n^{1/r}(np) \log n \} = \Phi_n
\]
  while the coefficient depending on $\eps$ tends to infinity as $\eps \to \infty$ (in the case $c \in (0,\infty)$ it follows from Lemma~\ref{lem:cont_and_minimiz}). Therefore by Theorem~\ref{thm:iid_UT} inequality \eqref{eq:cT_upper} holds.

\textbf{Case (ii)} overlaps with the three last cases of \eqref{eq:stars_UT} and we again denote by $\Phi(\eps)$ the right hand side of \eqref{eq:stars_UT} and will use that $\Phi(\eps) \asymp \Phi_n$ for any fixed $\eps > 0$. To prove the upper bound we will use decomposition $X = Y' + Y''$ with $Y',Y''$ defined in \eqref{eq:Y_primes} (but with $X_1, \dots, X_n$ being the vertex degrees of $\G(n,p)$) and apply Theorem~\ref{thm:C} to $Y'$ and the last claim of Lemma~\ref{lem:big_value_sum_LDP} to $Y''$. Note that since $N = n-1$, we have $\nu = \mu)$. Consistently with the notation of Lemma~\ref{lem:big_value_sum_LDP}, we set
\[
  R := \eta M, \quad \text{ with } \quad M := \min\left\{ (r!\eps \mu)^{1/r}, n-1 \right\}, \quad \eta := {\log^{-2} (1/p)} \ .
\]

To apply Threorem~\ref{thm:C}, let $\fS$ consist of all $\binom{n}{2}$ pairs of vertices in $[n]$, and let $\cI$ be the family of $r$-element subsets which form the edge-sets of $r$-stars. Hence $|\cI| = n \binom{n-1}{r}$, while $Y_\alpha$ is the indicator that star $\alpha$ is present in $\G(n,p)$ and $\sum_{\alpha \in \cI} \E Y_\alpha = n\binom{n-1}{r}p^r = \mu$.

Given a graph $G$ on vertex set $[n]$, we say that a $r$-star is \emph{dull} if its center vertex has degree at most $R$. Given a dull star, it shares edges with at most $2r\binom{R-1}{r-1}$ other dull stars. Hence the subfamily $\cJ_G \subset \cI$ corresponding to dull stars in $G$ satisfies \eqref{eq:max_cond} with 
\[
  C := 2r\binom{R-1}{r-1}\le 4R^{r-1} = \frac{4 M^{r-1}}{\log^{2(r-1)} (1/p)}.
\]
Consider $\alpha \in \cI$. If $\alpha$ is a dull star in $\G(n,p)$, that is, $\alpha \in \cJ_{\G(n,p)}$, then in particular it is a star in $\G(n,p)$, that is, $Y_\alpha = 1$.
Since $Y'$ counts the number of dull $r$-stars in $\G(n,p)$, it follows that 
\[
  Y' = \sum_{\alpha \in \cJ_{\G(n,p)}} 1 = \sum_{\alpha \in \cJ_{\G(n,p)}} Y_\alpha \le Z_C,
\]
and thus $\prob{Y' \ge (1 + \delta)\mu} \le \prob{Z_C \ge (1 + \delta)\mu}$ for any $\delta$.
  Theorem~\ref{thm:C} implies that for every $\delta > 0$
\begin{align}
  \notag -\log \prob{Y' \ge (1 + \delta) \mu} &\ge - \log \prob{Z_C \ge (1 + \delta) \mu} \ge \frac{\phi(\delta)\mu}{C} \ge  \frac{\phi(\delta)\mu \log^{2(r-1)} (1/p)}{4M^{r-1}} \\
  \label{eq:dull_upper} &\asymp \max \{\mu^{1/r}, \mu/n^{r-1}  \} \log^{2(r-1)} (1/p) \gg \Phi_n \ .
\end{align}
To bound the tail of $Y''$ we will apply the last assertion of Lemma~\ref{lem:big_value_sum_LDP}. In order to verify that conditions \eqref{eq:eta_log}, \eqref{eq:eta_M} and \eqref{eq:Gnp_assumption} are satisfied, it is useful to note that $\log \frac{M}{(n-1)p} \sim \min \{ \log n^{1/r}, \log \frac{1}{p} \} \asymp \log \frac{1}{p} =: \ell.$
First, \eqref{eq:eta_log} holds because
\[
\log \frac{1}{\eta} = \log (\ell^2) \ll \ell \asymp \log \frac{M}{(n-1)p} \ .
\]
Next, \eqref{eq:eta_M} holds, because, using $\mu \ge \log^{7r} n$,
\[
  \eta M \asymp \frac{\min \{\mu^{1/r}, n\}}{\ell^2} \gg \frac{\log n}{\ell} \asymp \frac{\log n}{\log \frac{M}{(n-1)p}} \ .
\]
Finally, \eqref{eq:Gnp_assumption} holds, because
\[
  \frac{\mu}{M^{r+1}} \asymp \max \left\{ \mu^{-1/r}, p^r \right\} \le \max \left\{ 1/\log^7 n, \e^{-r \ell} \right\} \ll {\ell^{-6}} = \eta^3 \ .
\]
 Given an arbitrary constant $\delta \in (0, \eps)$, elementary calculations show that $S(n,n-1,p, \eps - \delta) \sim \Phi(\eps - \delta)$. Hence inequality \eqref{eq:dull_upper} and Lemma~\ref{lem:big_value_sum_LDP} imply
\begin{align*}
  \prob{X \ge (1 + \eps) \mu} &\le \prob{Y' \ge (1 + \delta)\mu} + \prob{Y'' \ge (\eps - \delta)\mu}\\
  &\le \e^{-\omega(\Phi_n)} + \e^{ - (1 + o(1))\Phi(\eps - \delta) } = \e^{- (1 + o(1))\Phi(\eps - \delta)}\ .
\end{align*}
Since $\delta$ can be chosen arbitrarily small, and $\Phi(\eps)$ depends continuously on $\eps$, the upper bound follows.

For the lower bound assume $\mu/\binom{n}{r} \to \rho \in [0, \infty]$ where $\rho = 0$ and $\rho = \infty$ correspond to the second and fourth cases in \eqref{eq:stars_UT}. Choose a `small' constant $\delta > 0$. Let 
\[
  a := \floor{(\eps + \delta) \mu/\tbin{n}{r}} \quad \text{and} \quad
  b:=  
    \begin{cases}
      \left( r!(\eps + \delta)\mu \right)^{1/r} \quad &\text{if } \rho = 0\\
      \left\{ (\eps + \delta) \rho \right\}^{1/r}(n-1) \quad &\text{if } \rho \in (0, \infty)\\
      n-1 \quad &\text{if } \rho = \infty \ .
    \end{cases}
\]
Note that $p \to 0$ implies $a \ll n$ and define event
\[
  \cA := \{X_1 = \dots = X_a = n-1, X_{a+1} \ge b\}.
\]
If $\delta \in (0, \infty)$, for small enough $\delta$ we have that $(\eps + \delta)\rho \notin \Z$ and therefore $a \to \floor{(\eps + \delta) \rho}$ whenever $\rho \in [0,\infty)$.
With this in hand, it is straightforward to verify that event $\cA$ implies $\sum_{i \in [a+1]}\binom{X_i}{r} \ge a \binom{n-1}{r} + \binom{b}{r} \sim (\eps + \delta)\mu$. 
Further let $\hat X$ be the number of $r$-stars in the subgraph of $\G(n,p)$ induced by $[n] \setminus [a+1]$. Using that $a \ll n$, we obtain that  
\[
  \hat \mu := \E \hat{X} = (n-a-1)\binom{n-a-2}{r}p^r \sim \mu\ .
\]
Since $\hat{X}$ is independent of $(X_i : i \in [a+1])$, we have, for $n$ large enough,
\[
  \prob{X \ge (1 + \eps) \mu} \ge \prob{\cA}\prob{\hat{X} \ge (1 - \delta/2)\hat{\mu}} \ .
\]
It is known (see, e.g., \citer{JLRbook}{Lemma~3.5}) that $\Var \hat{X} \asymp \hat{\mu}^2/(\min_{k = 1}^r n^{k+1}p^k) \ll \hat{\mu}^2$. Hence Chebyshev's inequality easily implies that $\prob{\hat{X} \ge (1 - \delta/2)\hat{\mu}} \to 1$. It remains to prove that
\begin{equation}
  \label{eq:lower_goal}
  \log \prob{\cA} \ge -(1 + o(1)) \Phi(\eps + \delta),
\end{equation}
as then the fact that $\Phi(\eps + \delta) \sim \Phi(\eps)$ for $\delta \to 0$ gives the desired lower bound.
Since $a \ll n$, we get 
  \begin{align*}
    \prob{\cA} &= p^{a(n-1) - \binom{a}{2}}\prob{\Bin(n-1 -a,p) + a \ge b} \\
    &\ge p^{an(1 + o(1))}\prob{\Bin(n-1,p) \ge b}.
  \end{align*}
  Recall that for $\rho \in (0, \infty)$ we can assume $(\eps + \delta)\rho \notin \Z$. It follows that $b \gg np$. By \eqref{eq:binom_lower} we get
  \begin{align*}
    \log \prob{\cA} &\ge -(1 + o(1)) \left[na\log \frac{1}{p} + \ceil{b}\log \frac{\ceil{b}}{(n-1)p} \right] \\
  &\sim - 
    \begin{cases}
      0 + (r!(\eps + \delta)\mu)^{1/r} \log n^{1/r}   \quad &\text{if } \rho = 0\\
      \left( \floor{(\eps + \delta)\rho} + \left\{ (\eps + \delta)\rho \right\}^{1/r}\right) n \log \frac{1}{p} \quad &\text{if } \rho \in (0, \infty)\\
      n(\eps + \delta)\mu/\binom{n}{r} \log \frac{1}{p} \quad &\text{if } \rho = \infty \ .
    \end{cases}
  \end{align*}
 Since $\rho \in (0, \infty)$ implies that $p = n^{-1/r + o(1)}$ and therefore $\log (1/p) \sim r^{-1} \log n$, inequality \eqref{eq:lower_goal} follows, thus completing the proof of Case (ii) and thus the whole theorem. \qed

\bibliographystyle{plain}
\bibliography{star_tails}

\end{document}